\newtheorem{theorem}{Theorem}[section]
\newtheorem{corollary}[theorem]{Corollary}
\newtheorem{example}[theorem]{Example}
\newtheorem{lemma}[theorem]{Lemma}
\newtheorem{proposition}[theorem]{Proposition}
\newtheorem{remark}[theorem]{Remark}
\newenvironment{proof}[1][Proof]{\noindent\textbf{#1.} }{\ \rule{0.5em}{0.5em}}
\begin{document}

\title{Orthosymmetric spaces over an Archimedean vector lattice}
\author{M. A. Ben Amor, K. Boulabiar, and J. Jaber\\{\small Research Laboratory of Algebra, Topology, Arithmetic, and Order}\\{\small Department of Mathematics}\\{\small Faculty of Mathematical, Physical and Natural Sciences of Tunis}\\{\small Tunis-El Manar University, 2092-El Manar, Tunisia}}
\date{}
\maketitle

\begin{abstract}
We introduce and study the notion of orthosymmetric spaces over an Archimedean
vector lattice as a generalization of finite-dimentional Euclidean inner
spaces. A special attention has been paid to linear operators on these spaces.

\end{abstract}

\section{Introduction and first properties}

We take it for granted that the reader is familiar with the elementary theory
of vector lattices (i.e., Riesz spaces) and positive operators. For
terminology, notation, and properties not explained or proved we refer to the
standard texts \cite{LZ71,Z83}.

Let $\mathbb{V}$ be an Archimedean vector lattice. Following Buskes and van
Rooij in \cite{BR00}, we call a $\mathbb{V}$-\textsl{valued orthosymmetric
product} on a vector lattice $L$ any function that takes each ordered pair
$\left(  f,g\right)  $ of elements of $L$ to a vector $\left\langle
f,g\right\rangle $ of $\mathbb{V}$ and has the two following properties.

\begin{enumerate}
\item[(1)] (\textsl{Positivity}) $\left\langle f,g\right\rangle \in
\mathbb{V}^{+}$ for all $f,g\in L^{+}$.

\item[(2)] (\textsl{Orthosymmetry}) $\left\langle f,g\right\rangle =0$ in
$\mathbb{V}$ for all $f,g\in L$ with $f\wedge g=0$.
\end{enumerate}

\noindent By an \textsl{orthosymmetric space over }$\mathbb{V}$ (or, just an
\textsl{orthosymmetric space }if no confusion can arise) we mean a vector
lattice $L$ along with a $\mathbb{V}$-valued orthosymmetric product on $L$. As
the next example shows, the classical Euclidean spaces fill within the
framework of orthosymmetric spaces.

\begin{example}
\label{Euclidean}As usual, $\mathbb{R}$ denotes the Archimedean vector lattice
of all real numbers. Pick $n\in\mathbb{N}=\left\{  1,2,...\right\}  $ and
suppose that the vector space $\mathbb{R}^{n}$ is equipped with its usual
structure of Euclidean space. In particular,%
\[
\left\langle f,g\right\rangle =\sum_{k=1}^{n}\left\langle f,e_{k}\right\rangle
\left\langle g,e_{k}\right\rangle \text{ for all }f,g\in\mathbb{R}^{n},
\]
where $\left(  e_{1},...,e_{n}\right)  $ is the canonical \emph{(}%
orthogonal\emph{) }basis of $\mathbb{R}^{n}$. Simultaneously, $\mathbb{R}^{n}$
is a vector lattice with respect to the coordinatewise ordering. That is,
\[
f\geq0\text{ in }\mathbb{R}^{n}\text{ if and only if }\left\langle
f,e_{k}\right\rangle \in\mathbb{R}^{+}\text{ for all }k\in\left\{
1,...,n\right\}  .
\]
Consequently, if $f,g\geq0$ in $\mathbb{R}^{n}$ then $\left\langle
f,g\right\rangle \in\mathbb{R}^{+}$. Furthermore, let $f,g\in\mathbb{R}^{n}$
such that $f\wedge g=0$. Whence,%
\[
\min\left\{  \left\langle f,e_{k}\right\rangle ,\left\langle g,e_{k}%
\right\rangle \right\}  =0\text{ for all }k\in\left\{  1,2,...,n\right\}  .
\]
Therefore, $\left\langle f,g\right\rangle =0$ meaning that the inner product
on $\mathbb{R}^{n}$ is an $\mathbb{R}$-valued orthosymmetric product. Thus,
the Euclidean space $\mathbb{R}^{n}$ is an orthosymmetric space over
$\mathbb{R}$.
\end{example}

\begin{quote}
\textit{Beginning with the next lines, we shall impose the blanket assumption
that all orthosymmetric spaces under consideration are taken over the fixed
Archimedean vector lattice }$\mathbb{V}$ (\textit{unless otherwise stated
explicitly}).
\end{quote}

\noindent The following property is useful for later purposes.

\begin{lemma}
\label{vp}Let $L$ be an orthosymmetric space. Then,%
\[
\left\langle f,f\right\rangle =\left\langle \left\vert f\right\vert
,\left\vert f\right\vert \right\rangle \in\mathbb{V}^{+}\text{ for all }f\in
L.
\]

\end{lemma}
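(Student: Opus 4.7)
The plan is to expand $\langle f,f\rangle$ and $\langle |f|,|f|\rangle$ via the standard Jordan decomposition $f=f^{+}-f^{-}$, with $f^{+}\wedge f^{-}=0$, and observe that after bilinear expansion the cross terms $\langle f^{+},f^{-}\rangle$ and $\langle f^{-},f^{+}\rangle$ are killed by orthosymmetry, leaving an expression that is manifestly the same for $f$ and for $|f|=f^{+}+f^{-}$ and that lies in $\mathbb{V}^{+}$ by positivity.

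In more detail, the word \emph{product} in the definition is to be read as \emph{bilinear} product (as in \cite{BR00}), so $\langle\,\cdot\,,\,\cdot\,\rangle$ is linear in each slot. Hence
\[
\langle f,f\rangle=\langle f^{+}-f^{-},f^{+}-f^{-}\rangle=\langle f^{+},f^{+}\rangle-\langle f^{+},f^{-}\rangle-\langle f^{-},f^{+}\rangle+\langle f^{-},f^{-}\rangle,
\]
and analogously
\[
\langle |f|,|f|\rangle=\langle f^{+}+f^{-},f^{+}+f^{-}\rangle=\langle f^{+},f^{+}\rangle+\langle f^{+},f^{-}\rangle+\langle f^{-},f^{+}\rangle+\langle f^{-},f^{-}\rangle.
\]
Since $f^{+}\wedge f^{-}=0$, applying orthosymmetry to the ordered pairs $(f^{+},f^{-})$ and $(f^{-},f^{+})$ gives $\langle f^{+},f^{-}\rangle=\langle f^{-},f^{+}\rangle=0$. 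Both displays therefore collapse to $\langle f^{+},f^{+}\rangle+\langle f^{-},f^{-}\rangle$, which proves the equality.

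Finally, because $f^{+},f^{-}\in L^{+}$, the positivity axiom yields $\langle f^{+},f^{+}\rangle,\langle f^{-},f^{-}\rangle\in\mathbb{V}^{+}$, so their sum lies in $\mathbb{V}^{+}$ as well. The only subtle point — really the only place where one has to commit to an interpretation — is recognising that \emph{orthosymmetric product} is meant bilinearly; once this is granted the argument is a direct expansion. Note that symmetry of the product is not needed, since orthosymmetry is applied separately to $(f^{+},f^{-})$ and $(f^{-},f^{+})$.
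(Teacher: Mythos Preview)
Your proof is correct and follows exactly the same route as the paper: decompose $f=f^{+}-f^{-}$, use orthosymmetry on the disjoint pair $(f^{+},f^{-})$ (and $(f^{-},f^{+})$) to kill the cross terms, and conclude that both $\langle f,f\rangle$ and $\langle |f|,|f|\rangle$ equal $\langle f^{+},f^{+}\rangle+\langle f^{-},f^{-}\rangle\in\mathbb{V}^{+}$. Your remark that symmetry is not needed here is a nice clarification, since the paper establishes symmetry only in the subsequent lemma.
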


\begin{proof}
If $f\in L$ then $f^{+}\wedge f^{-}=0$. It follows that%
\[
\left\langle f^{+},f^{-}\right\rangle =\left\langle f^{-},f^{+}\right\rangle
=0.
\]
Hence,%
\begin{align*}
\left\langle f,f\right\rangle  &  =\left\langle f^{+}-f^{-},f^{+}%
-f^{-}\right\rangle =\left\langle f^{+},f^{+}\right\rangle +\left\langle
f^{-},f^{-}\right\rangle \\
&  =\left\langle f^{+}+f^{-},f^{+}+f^{-}\right\rangle =\left\langle \left\vert
f\right\vert ,\left\vert f\right\vert \right\rangle \in\mathbb{V}^{+}.
\end{align*}
This is the desired result.
\end{proof}

At first sight, it might seem that it is easy to establish the following
remarkable property of orthosymmetric spaces. However, all proofs that can be
found in the literature are quite involved and far from being trivial (see,
e.g., Corollary $2$ in \cite{BR00} and Theorem $3.8.14$ in \cite{S10}). By the
way, it should be pointed out that this property is based on the fact that
$\mathbb{V}$ is Archimedean.

\begin{lemma}
\label{Steinberg}Let $L$ be an orthosymmetric space. Then,%
\[
\left\langle f,g\right\rangle =\left\langle g,f\right\rangle \text{ for all
}f,g\in L.
\]

\end{lemma}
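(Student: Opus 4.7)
I would reduce first to the case $f,g\in L^+$. Bilinearity of $\langle\cdot,\cdot\rangle$ (which the proof of Lemma~\ref{vp} already tacitly uses) gives
\[
\langle f,g\rangle - \langle g,f\rangle = \sum_{\varepsilon,\delta\in\{+,-\}} \varepsilon\delta\bigl(\langle f^{\varepsilon},g^{\delta}\rangle - \langle g^{\delta},f^{\varepsilon}\rangle\bigr),
\]
so it is enough to establish symmetry on $L^+\times L^+$. A natural first attempt for $f,g\in L^+$ is to set $h=f\wedge g$, $p=(f-g)^+$, $q=(g-f)^+$ and write $f=h+p$, $g=h+q$; since $p\wedge q=0$, bilinear expansion together with orthosymmetry of the pair $(p,q)$ reduces $\langle f,g\rangle-\langle g,f\rangle$ to a combination of defects involving $h$, $p$, and $q$. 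This alone does not close the argument, because $p$ and $q$ need not be disjoint from $h$, and some additional input is indispensable; this is the place at which the Archimedean hypothesis on $\mathbb{V}$ must enter.

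The decisive step is an Archimedean squeeze. For each $n\in\mathbb{N}$ I would construct, by lattice operations on $f$ and $g$, pairwise disjoint $e_1,\ldots,e_N\in L^+$ together with scalars $\alpha_k,\beta_k\ge 0$ such that
\[
\Bigl|f-\sum_k\alpha_ke_k\Bigr|\le\tfrac{1}{n}(f+g),\qquad \Bigl|g-\sum_k\beta_ke_k\Bigr|\le\tfrac{1}{n}(f+g).
\]
Orthosymmetry then annihilates every cross term $\langle e_i,e_j\rangle$ for $i\ne j$, so the inner product of the two approximants collapses to $\sum_k\alpha_k\beta_k\langle e_k,e_k\rangle$, which is manifestly symmetric in the two slots. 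Positivity and monotonicity of $\langle\cdot,\cdot\rangle$ bound the perturbation error by $\tfrac{C}{n}\langle f+g,f+g\rangle$ for a fixed constant $C$, yielding
\[
n\,\bigl|\langle f,g\rangle-\langle g,f\rangle\bigr|\le w
\]
with $w\in\mathbb{V}^+$ independent of $n$. The Archimedean property of $\mathbb{V}$ forces $\langle f,g\rangle=\langle g,f\rangle$.

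The main obstacle is the construction of the disjoint family $e_k$ inside an arbitrary Archimedean vector lattice $L$, which need not possess any projection property. The standard remedy is a Freudenthal-type spectral decomposition inside the principal ideal generated by $f+g$, possibly after passing first to its uniform completion (where components of an order unit are guaranteed to exist) and then transporting the resulting inequality back to $\mathbb{V}$. An alternative route is to build the $e_k$ natively from positive-part expressions of the form $(jf-kg)^+$ together with some lattice bookkeeping to enforce pairwise disjointness. It is precisely this construction, not the final Archimedean conclusion, that accounts for the ``involved'' character of the proofs available in the literature.
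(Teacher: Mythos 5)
The paper itself offers no proof of Lemma~\ref{Steinberg}: it explicitly defers to Corollary~2 of \cite{BR00} and Theorem~3.8.14 of \cite{S10}, remarking that the known arguments are involved. So your proposal must stand on its own. Its overall shape is reasonable — reduce to $f,g\in L^{+}$ (your bilinear expansion is correct), replace $f,g$ by approximants whose pairing is visibly symmetric, and finish with an Archimedean squeeze; the error estimate via $\left\vert \left\langle f,g\right\rangle -\left\langle f^{\prime},g^{\prime}\right\rangle \right\vert \leq\left\langle \left\vert f-f^{\prime}\right\vert ,g\right\rangle +\left\langle f^{\prime},\left\vert g-g^{\prime}\right\vert \right\rangle$ is sound. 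But the step you yourself call decisive is exactly where the argument breaks, and neither of your proposed repairs closes the gap.

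The simultaneous approximation of $f$ and $g$ by combinations of a \emph{common} pairwise disjoint family simply does not exist in a general Archimedean vector lattice. Take $L=C\left[  0,1\right]  $, $f\left(  x\right)  =x$, $g\left(  x\right)  =1-x$, so $f+g=\mathbf{1}$. Pairwise disjoint $e_{1},\dots,e_{N}\geq0$ have disjoint open supports; by connectedness of $\left[  0,1\right]  $ either there is a point $x_{0}$ where every $e_{k}$ vanishes, and then the two bounds force $f\left(  x_{0}\right)  +g\left(  x_{0}\right)  \leq2/n<1$, a contradiction for $n\geq3$, or the family reduces to a single $e$, and $\alpha e\approx x$, $\beta e\approx1-x$ is impossible (compare values at $0$ and $1$). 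Since $C\left[  0,1\right]  $ is uniformly complete, this also refutes your first remedy: uniform completion does \emph{not} supply components of an order unit — that requires the principal projection property (e.g.\ Dedekind $\sigma$-completeness), and Freudenthal's theorem is unavailable in $C\left(  K\right)  $ for connected $K$. Your second remedy is not a construction either: the elements $\left(  jf-kg\right)  ^{+}$ are far from pairwise disjoint, and the counterexample shows no bookkeeping can fix this inside $L$. What does work is passing to the Dedekind completion $L^{\delta}$, where the joint spectral decomposition of $f,g$ relative to $u=f+g$ exists; but then one must first extend the orthosymmetric product to $L^{\delta}\times L^{\delta}$ while preserving orthosymmetry, which is a nontrivial theorem in its own right (it is the extension result of \cite{BK07} invoked in Step~2 of the proof of Theorem~\ref{main}) — precisely the hard content your sketch displaces rather than supplies. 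For comparison, the published proofs work directly in $L$ from inequalities extracted from orthosymmetry alone, such as $n\left\langle \left(  f-ng\right)  ^{+},g\right\rangle \leq\left\langle f,f\right\rangle$ (pair $c=\left(  f-ng\right)  ^{+}$ against $ng=f-c+\left(  ng-f\right)  ^{+}$ and use $c\wedge\left(  ng-f\right)  ^{+}=0$); an ingredient of this kind, rather than a disjoint decomposition, is what your argument is missing.
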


Roughly speaking, any $\mathbb{V}$-valued orthosymmetric product on a vector
lattice is symmetric (a multidimensional version of Lemma \ref{Steinberg} can
be found in \cite{B02}). We emphasize that results in Lemmas \ref{vp} and
\ref{Steinberg} could be used below without further mention.

Before proceeding our investigation, we note that our next terminology comes
in part from the Inner Product Theory linguistic repertoire (see \cite{B74}).
Let $L$ be an orthosymmetric space. An element $f$ in $L$ is said to
be\textsl{ neutral} if $\left\langle f,f\right\rangle =0$. Obviously, the zero
vector is neutral. The set of all neutral elements in $L$ is called the
\textsl{neutral part} of $L$ and is denoted by $L^{0}$. Namely,%
\[
L^{0}=\left\{  f\in L:\left\langle f,f\right\rangle =0\right\}  .
\]
The neutral part of $L$ has a nice characterization.

\begin{lemma}
\label{key}Let $L$ be an orthosymmetric space. Then,%
\[
L^{0}=\left\{  f\in L:\left\langle f,g\right\rangle =0\text{ for all }g\in
L\right\}  .
\]

\end{lemma}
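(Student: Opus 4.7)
The inclusion $\supseteq$ is immediate: setting $g=f$ in the right-hand side forces $\langle f,f\rangle = 0$, whence $f\in L^{0}$. The real content is the reverse inclusion, and my plan is to transcribe the classical Cauchy--Schwarz scaling trick into the present vector-lattice setting.

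Fix $f\in L^{0}$ and an arbitrary $g\in L$. Using bilinearity of the product (already tacitly invoked in the proof of Lemma \ref{vp}), the symmetry of Lemma \ref{Steinberg}, and the hypothesis $\langle f,f\rangle = 0$, I would expand
\[
0 \;\leq\; \langle f+\lambda g,\,f+\lambda g\rangle \;=\; 2\lambda\,\langle f,g\rangle \,+\, \lambda^{2}\,\langle g,g\rangle
\]
for every $\lambda\in\mathbb{R}$, where the inequality on the left comes from Lemma \ref{vp}. Specializing to $\lambda = \pm 1/n$ for $n\in\mathbb{N}$ and rearranging yields the two-sided estimate
\[
-\tfrac{1}{2n}\,\langle g,g\rangle \;\leq\; \langle f,g\rangle \;\leq\; \tfrac{1}{2n}\,\langle g,g\rangle \qquad (n\in\mathbb{N})
\]
in $\mathbb{V}$.

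The concluding step is an archimedean squeeze. Since $\langle g,g\rangle \in \mathbb{V}^{+}$ by Lemma \ref{vp} and $\mathbb{V}$ is Archimedean, any $u\in\mathbb{V}$ satisfying $u \leq \tfrac{1}{n}\langle g,g\rangle$ for all $n$ must satisfy $u\leq 0$ (apply the Archimedean axiom to $u^{+}\leq \tfrac{1}{n}\langle g,g\rangle$). Applying this to $u=\langle f,g\rangle$ and to $u=-\langle f,g\rangle$ simultaneously forces $\langle f,g\rangle = 0$, as required.

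I expect the main subtlety to lie precisely in this last step: one cannot ``divide by $\langle g,g\rangle$'' as over $\mathbb{R}$, so the non-negativity of the quadratic in $\lambda$ has to be exploited through the Archimedean property of $\mathbb{V}$ rather than through a discriminant argument. This is also exactly where the blanket hypothesis that $\mathbb{V}$ be Archimedean becomes indispensable.
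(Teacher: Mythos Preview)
Your proof is correct and follows essentially the same approach as the paper's: both expand the positive quadratic $\langle f+\lambda g,f+\lambda g\rangle$ (the paper with $\lambda=-n$, you with $\lambda=\pm 1/n$) and invoke the Archimedean property of $\mathbb{V}$ to squeeze $\langle f,g\rangle$ to zero. The only cosmetic difference is that the paper's choice $\langle g-nf,g-nf\rangle\geq 0$ yields directly $\pm 2n\langle f,g\rangle\leq\langle g,g\rangle$, avoiding fractions and the auxiliary $u^{+}$ step.
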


\begin{proof}
Obviously, if $f\in L$ with $\left\langle f,g\right\rangle =0$ for all $g\in
L$, then $\left\langle f,f\right\rangle =0$ so $f\in L^{0}$. Conversely, let
$f\in L^{0}$ and pick $g\in L$. Choose $n\in\mathbb{N}$ and observe that%
\[
0\leq\left\langle g-nf,g-nf\right\rangle =\left\langle g,g\right\rangle
-2n\left\langle f,g\right\rangle +n^{2}\left\langle f,f\right\rangle
=\left\langle g,g\right\rangle -2n\left\langle f,g\right\rangle .
\]
Therefore,%
\[
2n\left\langle f,g\right\rangle \leq\left\langle g,g\right\rangle \text{ for
all }n\in\mathbb{N}.
\]
Replacing $f$ by $-f$ in the above inequality, we obtain%
\[
-2n\left\langle f,g\right\rangle \leq\left\langle g,g\right\rangle \text{ for
all }n\in\mathbb{N}.
\]
But then $\left\langle f,g\right\rangle =0$ because $\mathbb{V}$ is
Archimedean. The proof is complete.
\end{proof}

An interesting lattice-ordered property of the neutral part of an
orthosymmetric space is obtained as a consequence of previous lemma.

\begin{theorem}
\label{neutral}The neutral part $L^{0}$ of an orthosymmetric space $L$ is an
ideal in $L$.
\end{theorem}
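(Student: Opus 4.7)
The plan is to verify the two defining properties of an ideal: that $L^{0}$ is a linear subspace of $L$, and that $L^{0}$ is solid, i.e.\ whenever $|f|\le|g|$ and $g\in L^{0}$ one has $f\in L^{0}$. Throughout I would lean heavily on the alternative description of $L^{0}$ provided by Lemma \ref{key}, which turns membership in $L^{0}$ from a single scalar condition into a bilinear one against all test vectors.

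For the subspace part, I would apply Lemma \ref{key} directly. If $f_{1},f_{2}\in L^{0}$ and $\lambda_{1},\lambda_{2}\in\mathbb{R}$, then for every $h\in L$ the bilinearity of the product (which is already implicit in the definition and has been used in the proofs of Lemmas \ref{vp} and \ref{key}) yields
\[
\left\langle \lambda_{1}f_{1}+\lambda_{2}f_{2},h\right\rangle =\lambda_{1}\left\langle f_{1},h\right\rangle +\lambda_{2}\left\langle f_{2},h\right\rangle =0,
\]
so $\lambda_{1}f_{1}+\lambda_{2}f_{2}$ meets the Lemma~\ref{key} criterion and lies in $L^{0}$.

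For solidity, assume $g\in L^{0}$ and $|f|\le|g|$. Lemma \ref{vp} gives $\left\langle |g|,|g|\right\rangle =\left\langle g,g\right\rangle =0$, so $|g|\in L^{0}$, and then Lemma \ref{key} supplies $\left\langle |g|,|f|\right\rangle =0$. The crucial auxiliary observation is that positivity combined with bilinearity makes $\left\langle \cdot,\cdot\right\rangle $ monotone in each slot on the positive cone: if $0\le u\le v$ and $w\ge0$, then $\left\langle v-u,w\right\rangle \in\mathbb{V}^{+}$, hence $\left\langle u,w\right\rangle \le\left\langle v,w\right\rangle$. Applied to $0\le|f|\le|g|$ with $w=|f|$, this gives the sandwich
\[
0\le\left\langle |f|,|f|\right\rangle \le\left\langle |g|,|f|\right\rangle =0,
\]
so $\left\langle |f|,|f|\right\rangle =0$. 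A final appeal to Lemma~\ref{vp} yields $\left\langle f,f\right\rangle =\left\langle |f|,|f|\right\rangle =0$, and thus $f\in L^{0}$.

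I do not expect any deep obstacle; Lemma \ref{key} has already absorbed the Archimedean input and essentially linearizes the neutrality condition, so the subspace part is automatic. The only nontrivial moment is the solidity step, and the point there is recognizing that one must pass through the absolute value, using Lemma~\ref{vp} to transfer neutrality from $g$ to $|g|$ and monotonicity of the product on positives to exploit the inequality $|f|\le|g|$.
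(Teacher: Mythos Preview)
Your proof is correct and follows essentially the same approach as the paper. The paper organizes the argument into three steps (subspace via expanding $\langle f+rg,f+rg\rangle$; sublattice via Lemma~\ref{vp}; solid on positives via monotonicity), whereas you use Lemma~\ref{key} more directly for the subspace part and fold the sublattice and positive-solid steps into a single solidity argument, but the underlying ingredients are identical.
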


\begin{proof}
Let $r$ be a real number and $f,g\in L^{0}$. Then,%
\[
\left\langle f+rg,f+rg\right\rangle =\left\langle f,f\right\rangle
+2r\left\langle f,g\right\rangle +r^{2}\left\langle g,g\right\rangle =0
\]
(where we use Lemma \ref{key}). It follows that $f+rg\in L^{0}$ and so $L^{0}$
is a vector subspace of $L$.

Secondly, let $f\in L^{0}$ and observe that%
\[
\left\langle \left\vert f\right\vert ,\left\vert f\right\vert \right\rangle
=\left\langle f,f\right\rangle =0.
\]
Hence, $\left\vert f\right\vert \in L^{0}$ and thus $L^{0}$ is a vector
sublattice of $L$.

Finally, let $f,g\in L$ such that $0\leq f\leq g$ and $g\in L^{0}$. Whence,%
\[
0\leq\left\langle f,f\right\rangle \leq\left\langle f,g\right\rangle
\leq\left\langle g,g\right\rangle =0.
\]
This yields that $L^{0}$ is a solid in $L$ and finishes the proof.
\end{proof}

An orthosymmetric space need not be Archimedean as it is shown in the next example.

\begin{example}
\label{lexi}Assume that the Euclidean plan $\mathbb{R}^{2}$ is endowed with
its lexicographic ordering. Hence, $\mathbb{R}^{2}$ is a non-Archimedean
vector lattice. Put%
\[
\left\langle f,g\right\rangle =x_{1}y_{1}\text{ for all }f=\left(  x_{1}%
,x_{2}\right)  ,g=\left(  y_{1},y_{2}\right)  \text{ in }\mathbb{R}^{2}.
\]
Since $\mathbb{R}^{2}$ is totally ordered \emph{(}i.e., linearly
ordered\emph{)}, this formula defines an $\mathbb{R}$-valued orthosymmetric
product on $\mathbb{R}^{2}$. This means that $\mathbb{R}^{2}$ is a
non-Archimedean orthosymmetric space over $\mathbb{R}$.
\end{example}

The orthosymmetric space $L$ is said to be \textsl{definite}\textit{ }if its
neutral part is trivial. That is to say, $L$ is definite if and only if%
\[
\left\langle f,f\right\rangle =0\text{ in }\mathbb{V}\text{ implies }f=0\text{
in }L.
\]
Definite orthosymmetric spaces have a better behavior as explained in the following.

\begin{proposition}
\label{arch}Any definite orthosymmetric space is Archimedean.
\end{proposition}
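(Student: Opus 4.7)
The plan is to reduce the Archimedean property of $L$ (as a vector lattice) to the Archimedean property of $\mathbb{V}$, by transporting inequalities through the orthosymmetric product. Concretely, I will take $f,g\in L$ with $0\leq nf\leq g$ for every $n\in\mathbb{N}$, and show that $f=0$.

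The first step is an auxiliary monotonicity observation: if $0\leq h\leq k$ in $L$, then $\langle h,h\rangle\leq\langle k,k\rangle$ in $\mathbb{V}$. Setting $k=h+(k-h)$ and expanding gives
\[
\langle k,k\rangle=\langle h,h\rangle+2\langle h,k-h\rangle+\langle k-h,k-h\rangle,
\]
and each of the last two summands lies in $\mathbb{V}^{+}$ by the positivity axiom (together with Lemma \ref{vp} for the third term), so the inequality follows.

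The second step applies this to $h=nf$ and $k=g$, which are both positive by the standing hypothesis $0\leq nf\leq g$. Using bilinearity of the orthosymmetric product (it is bi-additive and $\mathbb{R}$-homogeneous in each slot, as is implicit in the definition), one gets
\[
n^{2}\langle f,f\rangle=\langle nf,nf\rangle\leq\langle g,g\rangle \quad\text{for every }n\in\mathbb{N}.
\]
Since $\langle f,f\rangle\in\mathbb{V}^{+}$ by Lemma \ref{vp} and $\mathbb{V}$ is Archimedean, the fact that the multiples $n^{2}\langle f,f\rangle$ stay bounded above forces $\langle f,f\rangle=0$. Definiteness of $L$ now yields $f=0$, which is exactly the Archimedean property of $L$.

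The only delicate point is recognizing that the Archimedean property of $\mathbb{V}$ must be invoked on the sequence $\{n^{2}\langle f,f\rangle\}$ (rather than on $\{n\langle f,f\rangle\}$), but since the two are equivalent for a positive element in any Archimedean vector lattice, this is a minor issue; the rest is routine bilinear manipulation.
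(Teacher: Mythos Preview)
Your proof is correct and follows essentially the same strategy as the paper: transfer the bound $0\leq nf\leq g$ to an inequality in $\mathbb{V}$ via the positivity of the product, invoke the Archimedean property of $\mathbb{V}$ to conclude $\langle f,f\rangle=0$, and then apply definiteness. The only minor variation is that the paper works directly with $\langle g-nf,f\rangle\geq 0$ to obtain $n\langle f,f\rangle\leq\langle g,f\rangle$, whereas you first establish the monotonicity $\langle h,h\rangle\leq\langle k,k\rangle$ for $0\leq h\leq k$ and then deduce $n^{2}\langle f,f\rangle\leq\langle g,g\rangle$; both lead to the same conclusion.
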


\begin{proof}
Let $L$ be a definite orthosymmetric space and choose $f,g\in L^{+}$ with%
\[
0\leq nf\leq g\text{ for all }n\in\mathbb{N}.
\]
Pick $n\in\mathbb{N}$ and observe that $g-nf\in L^{+}$. So,%
\[
0\leq\left\langle g-nf,f\right\rangle =\left\langle g,f\right\rangle
-n\left\langle f,f\right\rangle .
\]
Hence,%
\[
0\leq n\left\langle f,f\right\rangle \leq\left\langle g,f\right\rangle \text{
for all }n\in\mathbb{N}.
\]
Since $\mathbb{V}$ is Archimedean, we get $\left\langle f,f\right\rangle =0$.
But then $f=0$ because $L$ is definite.
\end{proof}

By Theorem \ref{neutral}, the neutral part $L^{0}$ is an ideal in $L$. Hence,
we may consider the quotient vector lattice $L/L^{0}$ (see Chapter $9$ in
\cite{LZ71}). The equivalence class (i.e., the residue class)%
\[
f+L^{0}=\left\{  f+g:g\in L^{0}\right\}
\]
of a vector $f\in L$ is denoted by $\left[  f\right]  $. It turns out that
$L/L^{0}$ is automatically equipped with a structure of an orthosymmetric space.

\begin{theorem}
\label{quo}Let $L$ be an orthosymmetric space. Then $L/L^{0}$ is a definite
orthosymmetric space with respect to the $\mathbb{V}$-valued orthosymmetric
product given by%
\[
\left\langle \left[  f\right]  ,\left[  g\right]  \right\rangle =\left\langle
f,g\right\rangle \text{ for all }f,g\in L.
\]

\end{theorem}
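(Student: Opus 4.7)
The plan is to verify, in order: (a) that the formula $\langle [f],[g]\rangle = \langle f,g\rangle$ does not depend on the chosen representatives; (b) the positivity and orthosymmetry axioms for the resulting product on $L/L^{0}$; and (c) the definiteness of the quotient. The linchpin is well-definedness, after which the remaining conditions transport routinely from $L$ to $L/L^{0}$.

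First I would establish well-definedness. If $[f]=[f']$ and $[g]=[g']$, then $f-f'$ and $g-g'$ lie in $L^{0}$, so Lemma \ref{key} gives $\langle f-f',g\rangle = 0$ and $\langle f',g-g'\rangle = 0$; expanding bilinearly (with Lemma \ref{Steinberg} providing symmetry wherever it is convenient) yields $\langle f,g\rangle = \langle f',g\rangle = \langle f',g'\rangle$. The formula therefore defines an unambiguous bilinear map $L/L^{0}\times L/L^{0} \to \mathbb{V}$.

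Next I would check positivity and orthosymmetry. For positivity I would use the standard fact that the quotient of a vector lattice by an ideal is again a vector lattice with $[f]^{+} = [f^{+}]$, so every nonnegative class $[f]$ admits the positive representative $f^{+}$. Given $[f],[g]\in (L/L^{0})^{+}$, replacing the representatives by $f^{+},g^{+}\in L^{+}$ and invoking positivity in $L$ yields $\langle [f],[g]\rangle = \langle f^{+},g^{+}\rangle \in \mathbb{V}^{+}$. For orthosymmetry, assume $[f]\wedge[g]=0$. Then $[f],[g]\geq 0$, and the same device lets me assume $f,g\in L^{+}$. Now $[f\wedge g] = [f]\wedge[g] = 0$, i.e.\ $h:=f\wedge g\in L^{0}$, and the elements $f':=f-h$ and $g':=g-h$ are nonnegative representatives of $[f],[g]$ satisfying $f'\wedge g' = (f\wedge g)-h = 0$. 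Orthosymmetry in $L$ then gives $\langle f',g'\rangle = 0$, and well-definedness yields $\langle [f],[g]\rangle = 0$.

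Finally, definiteness is immediate: $\langle [f],[f]\rangle = 0$ means $\langle f,f\rangle = 0$, which means $f\in L^{0}$, i.e.\ $[f] = 0$. The only step I expect to require any genuine care is orthosymmetry: the hypothesis $[f]\wedge[g]=0$ in the quotient gives only $f\wedge g\in L^{0}$, not $f\wedge g=0$ in $L$, so one must subtract $f\wedge g$ from positive representatives to manufacture a genuinely disjoint pair sitting inside the same two equivalence classes before the orthosymmetry of $L$ can be invoked.
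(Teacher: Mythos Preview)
Your proposal is correct and follows essentially the same approach as the paper: well-definedness via Lemma~\ref{key}, positivity by passing to positive representatives, orthosymmetry by subtracting $f\wedge g\in L^{0}$ to produce a genuinely disjoint pair, and definiteness directly. The only cosmetic difference is that the paper does not bother first replacing $f,g$ by positive representatives in the orthosymmetry step, since the identity $(f-f\wedge g)\wedge(g-f\wedge g)=0$ holds for arbitrary $f,g$ in a vector lattice; your extra reduction is harmless but unnecessary.
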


\begin{proof}
First of all, let's prove that the function that takes each ordered pair
$\left(  \left[  f\right]  ,\left[  g\right]  \right)  $ of elements of
$L/L^{0}$ to a vector $\left\langle \left[  f\right]  ,\left[  g\right]
\right\rangle $ of $\mathbb{V}$ and given%
\begin{equation}
\left\langle \left[  f\right]  ,\left[  g\right]  \right\rangle =\left\langle
f,g\right\rangle \text{ for all }f,g\in L \tag{$\ast$}%
\end{equation}
is well-defined. Indeed, choose $f,g\in L$ and $h,k\in L^{0}$. By Lemma
\ref{key}, we have%
\[
\left\langle f,k\right\rangle =\left\langle h,g\right\rangle =\left\langle
h,k\right\rangle =0.
\]
So,%
\[
\left\langle f+h,g+k\right\rangle =\left\langle f,g\right\rangle +\left\langle
f,k\right\rangle +\left\langle h,g\right\rangle +\left\langle h,k\right\rangle
=\left\langle f,g\right\rangle .
\]
We derive that the function given by $\left(  \ast\right)  $ is well-defined
(its bilinearity is obvious). Now, let $f,g\in L$ such that $\left[  f\right]
,\left[  g\right]  $ are positive in $L/L^{0}$. Hence, there exist $h,k\in
L^{0}$ such that $h\leq f$ and $k\leq g$. Whence, $0\leq f-h$ and $0\leq g-k$
from which it follows that%
\[
0\leq\left\langle f-h,g-k\right\rangle =\left\langle f,g\right\rangle
-\left\langle f,k\right\rangle -\left\langle h,g\right\rangle +\left\langle
h,k\right\rangle =\left\langle f,g\right\rangle =\left\langle \left[
f\right]  ,\left[  g\right]  \right\rangle .
\]
Moreover, if $\left[  f\right]  \wedge\left[  g\right]  =\left[  0\right]  $
in $L/L^{0}$ then
\[
\left[  f\wedge g\right]  =\left[  f\right]  \wedge\left[  g\right]  =\left[
0\right]  .
\]
This means that $f\wedge g\in L^{0}$. This together with Lemma \ref{key}
yields quickly that
\[
\left\langle \left[  f\right]  ,\left[  g\right]  \right\rangle =\left\langle
f,g\right\rangle =\left\langle f-f\wedge g,g-f\wedge g\right\rangle .
\]
But then $\left\langle \left[  f\right]  ,\left[  g\right]  \right\rangle =0$
because%
\[
\left(  f-f\wedge g\right)  \wedge\left(  g-f\wedge g\right)  =0.
\]
Accordingly, $L/L^{0}$ is an orthosymmetric space. It remains to show that
$L/L^{0}$ is definite. To see this, let $f\in L$ such that $\left\langle
\left[  f\right]  ,\left[  f\right]  \right\rangle =0$. Hence, $\left\langle
f,f\right\rangle =0$ from which it follows that $f\in L^{0}$, so $\left[
f\right]  =\left[  0\right]  $. This completes the proof.
\end{proof}

Taking into account Proposition \ref{arch} and Theorem \ref{quo}, we infer
directly that the quotient vector lattice $L/L^{0}$ is Archimedean and so
$L^{0}$ is a uniformly closed ideal in the vector lattice $L$ (see Theorem
$60.2$ in \cite{LZ71}).

\section{Multiplication operators in orthosymmetric product spaces}

Let $M$ be an ordered vector space. A vector subspace $V$ of $M$ is called a
\textit{lattice-subspace} of $M$ if $V$ is a vector lattice with respect to
the ordering inherited from $M$ (see Definition $5.58$ in \cite{AA02}). On the
other hand, in general, we cannot talk about $V$ being a vector sublattice of
$M$ as the latter space need not be a vector lattice. In order to cope with
this terminology problem, we call after Abramovich and Wickstead in
\cite{AW94} the lattice-subspace $V$ of $M$ a \textsl{generalized vector
sublattice} of $M$ if the supremum in $M$ of each $v,w\in V$ exists and
coincides with their supremum in $V$. Hence, if $M$ turns out to be a vector
lattice, then the word `generalized' becomes superfluous. Moreover, it is
trivial that every generalized vector sublattice of $M$ is a lattice-subspace.
Nevertheless, the converse is not true as we can see in the example provided
in \cite[Page $229$]{AA02}.

We start this section with the following general lemma which is presumably
well-known, though we have not been able to locate a precise reference for it.
As usual, the kernel and the range of any linear operator $T$ are denoted by
$\ker T$ and $\operatorname{Im}T$, respectively.

\begin{lemma}
\label{gene}Let $L$ be a vector lattice and $M$ be an ordered vector space.
Suppose that $T:L\rightarrow M$ is a positive operator such that

\begin{enumerate}
\item[\emph{(i)}] $\ker T$ is an vector sublattice of $L$, and

\item[\emph{(ii)}] $f^{-}\in\ker T$ for all $f\in L$ with $Tf\in M^{+}$.
\end{enumerate}

\noindent Then $\operatorname{Im}T$ is a lattice-subspace of $M$ and $T$ is a
lattice homomorphism from $L$ onto $\operatorname{Im}T$.
\end{lemma}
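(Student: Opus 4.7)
The plan is to establish in one shot the following claim: for every $f,g\in L$, the vector $T(f\vee g)$ is the supremum of $\{Tf,Tg\}$ in the ordered subspace $\operatorname{Im}T$. This single statement will yield both conclusions simultaneously. Indeed, the existence of such suprema (and, by symmetry, of infima) makes $\operatorname{Im}T$ a vector lattice under the order inherited from $M$, hence a lattice-subspace of $M$; and the formula $T(f\vee g)=Tf\vee Tg$, taken in $\operatorname{Im}T$, exhibits $T$ as a lattice homomorphism from $L$ onto $\operatorname{Im}T$ (surjectivity onto the image being free).

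That $T(f\vee g)$ is an upper bound of $\{Tf,Tg\}$ in $\operatorname{Im}T$ is immediate from the positivity of $T$. For the least-upper-bound half, I would pick $h\in L$ with $Th\geq Tf$ and $Th\geq Tg$ in $M$ and aim to prove $Th\geq T(f\vee g)$. The inequalities $T(h-f)\geq 0$ and $T(h-g)\geq 0$, combined with hypothesis (ii), place both $(h-f)^{-}$ and $(h-g)^{-}$ in $\ker T$. The pivot of the argument is then the identity
\[
(h-f\vee g)^{-}=(h-f)^{-}\vee(h-g)^{-},
\]
which follows from $h-f\vee g=(h-f)\wedge(h-g)$ together with the standard Riesz-space formula $(a\wedge b)^{-}=a^{-}\vee b^{-}$. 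Hypothesis (i) — the assumption that $\ker T$ is a vector sublattice — now forces $(h-f\vee g)^{-}$ into $\ker T$, so $T((h-f\vee g)^{-})=0$. Consequently $T(h-f\vee g)=T((h-f\vee g)^{+})\geq 0$, i.e.\ $Th\geq T(f\vee g)$, as wanted.

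Once the supremum claim is secured, the corresponding infimum formula $Tf\wedge Tg=T(f\wedge g)$ in $\operatorname{Im}T$ drops out by applying the claim to $-f,-g$ together with $f\wedge g=-((-f)\vee(-g))$, finishing both parts of the statement. The principal obstacle is the interlocking use of the two hypotheses: condition (ii) is what allows one to pull the inequalities $Th\geq Tf$ and $Th\geq Tg$ back to negative parts lying in $\ker T$, while condition (i) is precisely what lets the lattice sup of two such negative parts be reabsorbed into $\ker T$. Neither hypothesis alone gives the displayed identity any traction; it is their combination that closes the argument.
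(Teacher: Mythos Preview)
Your proof is correct and follows essentially the same route as the paper: show that $T(f\vee g)$ is an upper bound of $\{Tf,Tg\}$ by positivity, then for any other upper bound $Th$ use hypothesis~(ii) to put $(h-f)^{-}$ and $(h-g)^{-}$ in $\ker T$, invoke the identity $(h-f\vee g)^{-}=(h-f)^{-}\vee(h-g)^{-}$, and apply hypothesis~(i) to conclude. Your write-up is slightly more explicit than the paper's in justifying that identity and in spelling out the infimum case, but the argument is the same.
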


\begin{proof}
Let $f,g\in L$. Since $T$ is positive,%
\[
T\left(  f\vee g\right)  \geq Tf\text{ and }T\left(  f\vee g\right)  \geq Tg.
\]
This means that $T\left(  f\vee g\right)  $ is an upper bound of $\left\{
Tf,Tg\right\}  $ in $\operatorname{Im}T$. Let $v\in\operatorname{Im}T$ another
upper bound of $\left\{  Tf,Tg\right\}  $ in $\operatorname{Im}T$. There
exists $h\in L$ such that $v=Th$. Since%
\[
Th=v\geq Tf\text{ and }Th=v\geq Tg,
\]
we get%
\[
T\left(  h-f\right)  \geq0\text{ and }T\left(  h-g\right)  \geq0.
\]
Therefore,%
\[
\left(  h-f\right)  ^{-}\in\ker T\text{ and }\left(  h-g\right)  ^{-}\in\ker
T.
\]
But then%
\[
\left(  h-\left(  f\vee g\right)  \right)  ^{-}=\left(  h-f\right)  ^{-}%
\vee\left(  h-g\right)  ^{-}\in\ker T
\]
because $\ker T$ is a vector sublattice of $M$. It follows that%
\[
T\left(  h-\left(  \left(  f\vee g\right)  \right)  \right)  =T\left(
f-\left(  \left(  f\vee g\right)  \right)  ^{+}\right)  \geq0
\]
and so%
\[
v=Th\geq T\left(  f\vee g\right)  .
\]
We derive that $T\left(  f\vee g\right)  $ is a supremum of $\left\{
Tf,Tg\right\}  $ in $\operatorname{Im}T$. That is,%
\[
T\left(  f\vee g\right)  =Tf\vee Tg\text{ in }\operatorname{Im}T
\]
and the proof is complete.
\end{proof}

Lemma \ref{gene} does not hold without the condition $\mathrm{(ii)}$. An
example in this direction is given next.

\begin{example}
\label{integ}Let $L=M=C\left[  0,\pi\right]  $, where $C\left[  0,\pi\right]
$ is the Archimedean vector lattice of all real-valued continuous functions on
the real interval $\left[  0,\pi\right]  $. Define $T:L\rightarrow M$ by
putting%
\[
\left(  Tf\right)  \left(  x\right)  =\int_{0}^{x}f\left(  y\right)
\mathrm{d}y\text{ for all }f\in L\text{ and }x\in\left[  0,\pi\right]  .
\]
Obviously, $T$ is a positive operator. Moreover, $T$ is one-to-one and so
$\ker T=\left\{  0\right\}  $ is an ideal in $L$. However, if $f\in L$ is
defined by%
\[
f\left(  x\right)  =2\sin x\cos x\text{ for all }x\in\left[  0,\pi\right]  ,
\]
then%
\[
\left(  Tf\right)  \left(  x\right)  =\left(  \sin x\right)  ^{2}\text{ for
all }x\in\left[  0,\pi\right]  .
\]
Furthermore,%
\[
f^{-}\left(  x\right)  =\left\{
\begin{array}
[c]{l}%
2\sin x\cos x\text{ if }x\in\left[  0,\pi/2\right] \\
\\
0\text{ if }x\in\left[  \pi/2,\pi\right]  .
\end{array}
\right.
\]
Thus,%
\[
\left(  T\left(  f^{-}\right)  \right)  \left(  \pi/2\right)  =\int_{0}%
^{\pi/2}2\sin x\cos x\mathrm{d}x=1\neq0.
\]
Hence, $f^{-}\notin\ker T$. Observe now that%
\[
\operatorname{Im}T=\left\{  f\in L:f\left(  0\right)  =0\text{ and }f\text{
continuously differentiable}\right\}
\]
is not a lattice-subspace of $M$.
\end{example}

Hence, Example \ref{integ} proves that the condition $\mathrm{(ii)}$ is not
redundant in Lemma \ref{gene}. In spite of that, the following observation
deserves particular attention.

\begin{remark}
Let $L$ be a vector lattice, $M$ be an ordered vector space, and
$T:L\rightarrow M$ be a positive operator such that $\ker T$ is a vector
sublattice of $L$. We derive quickly that $\ker T$ is an ideal in $L$. Hence,
we may speak about the vector lattice $L/\ker T$ \emph{(}see
\emph{\cite[Chapter $9$]{LZ71})}. In this situation, it is not hard to see
that an ordering can be defined on $\operatorname{Im}T$ by putting%
\[
Tf\preccurlyeq Tg\text{ in }\operatorname{Im}T\text{ if and only if }g-f\geq
h\text{ for some }h\in\ker T.
\]
This ordering makes $\operatorname{Im}T$ into a vector lattice which is
lattice isomorphic with $L/\ker T$. The lattice operations in
$\operatorname{Im}T$ are given pointwise as follows%
\[
Tf\curlyvee Tg=T\left(  f\vee g\right)  \text{ and }Tf\curlywedge Tg=T\left(
f\wedge g\right)  \text{ for all }f,g\in L.
\]
However, $\operatorname{Im}T$ need not be, in general, a lattice-subspace of
$M$. As a matter of fact, \emph{Example \ref{integ}} illustrates this
situation. Indeed, we have observed already that%
\[
\operatorname{Im}T=\left\{  f\in C\left[  0,\pi\right]  :f\left(  0\right)
=0\text{ and }f\text{ continuously differentiable}\right\}
\]
is not a lattice-subspace of $C\left[  0,\pi\right]  $. Nevertheless,
$\operatorname{Im}T$ is a vector lattice with respect to the `new' ordering
defined by%
\[
f\preccurlyeq g\text{ if and only if }f^{\prime}\leq g^{\prime},
\]
where $f^{\prime}$ and $g^{\prime}$ denote the derivative of $f$ and $g$,
respectively. Moreover, the lattice operations in this vector lattice are
given by%
\[
\left(  f\curlyvee g\right)  \left(  x\right)  =\int_{0}^{x}\left(  f^{\prime
}\vee g^{\prime}\right)  \left(  y\right)  \mathrm{d}y\text{ and }\left(
f\curlywedge g\right)  \left(  x\right)  =\int_{0}^{x}\left(  f^{\prime}\wedge
g^{\prime}\right)  \left(  y\right)  \mathrm{d}y
\]
for all $f,g\in\operatorname{Im}T$ and $x\in\left[  0,\pi\right]  $.
\end{remark}

Henceforth, $L$ stands for an orthosymmetric space over the Archimedean vector
lattice $\mathbb{V}$ and $\mathcal{L}^{+}\left(  L,\mathbb{V}\right)  $
denotes the set of all positive operators from $L$ into $\mathbb{V}$. It could
be helpful to recall that an operator $T:L\rightarrow\mathbb{V}$ is said to be
\textsl{regular} if $T=R-S$ for some $R,S\in\mathcal{L}^{+}\left(
L,\mathbb{V}\right)  $. The set $\mathcal{L}_{r}\left(  L,\mathbb{V}\right)  $
of all regular operators from $L$ into $\mathbb{V}$ is an Archimedean ordered
vector space with respect to the pointwise addition, scalar multiplication,
and ordering. By the way, $\mathcal{L}^{+}\left(  L,,\mathbb{V}\right)  $ is
the positive cone of $\mathcal{L}_{r}\left(  L,\mathbb{V}\right)  $.

At this point, let $f\in L$ and define a map $\Phi_{f}:L\rightarrow\mathbb{V}$
by putting%
\[
\Phi_{f}g=\left\langle f,g\right\rangle \text{ for all }g\in L.
\]
Obviously, $\Phi_{f}$ is a linear operator, which is referred to as a
\textsl{multiplication operator} on the orthosymmetric space $L$. Further
elementary (but very useful) properties of such operators are gathered next.

\begin{lemma}
\label{tech}Let $L$ be an orthosymmetric space and $f\in L$. Then the
following hold.

\begin{enumerate}
\item[\emph{(i)}] $\Phi_{f}\in\mathcal{L}^{+}\left(  L,\mathbb{V}\right)  $
whenever $f\in L^{+}$.

\item[\emph{(ii)}] $\Phi_{f}=\Phi_{f^{+}}-\Phi_{f^{-}}\in\mathcal{L}%
_{r}\left(  L,\mathbb{V}\right)  $.

\item[\emph{(iii)}] $\Phi_{f}=0$ if and only if $f\in L^{0}$.

\item[\emph{(iv)}] $\Phi_{f}\in\mathcal{L}^{+}\left(  L,\mathbb{V}\right)  $
if and only if $f^{-}\in L^{0}$.
\end{enumerate}
\end{lemma}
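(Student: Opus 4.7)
The plan is to dispatch the four items in order, drawing on earlier results (Lemma \ref{vp} for $\langle f,f\rangle \ge 0$, Lemma \ref{Steinberg} for symmetry, and Lemma \ref{key} for the characterization of $L^0$), so that each item uses what precedes it.

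For (i), I would simply observe that $\Phi_f$ is linear in its argument (because the product is bilinear, which follows from the bilinearity conventions built into the definition plus the symmetry lemma) and then apply the positivity axiom directly: if $f,g \in L^+$ then $\Phi_f g = \langle f,g\rangle \in \mathbb{V}^+$. For (ii), I would write $f = f^+ - f^-$ and use linearity in the first slot to get $\Phi_f = \Phi_{f^+} - \Phi_{f^-}$; both summands lie in $\mathcal{L}^+(L,\mathbb{V})$ by (i), so $\Phi_f \in \mathcal{L}_r(L,\mathbb{V})$. For (iii), the equivalence
\[
\Phi_f = 0 \iff \langle f,g\rangle = 0 \text{ for all } g \in L \iff f \in L^0
\]
is exactly the content of Lemma \ref{key}.

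The only genuinely interesting item is (iv). One direction is free: if $f^- \in L^0$ then $\Phi_{f^-} = 0$ by (iii), whence $\Phi_f = \Phi_{f^+}$ by (ii), which is positive by (i). For the converse I would test $\Phi_f$ against the vector $f^-$ itself. Since $f^+ \wedge f^- = 0$, orthosymmetry gives $\langle f^+, f^-\rangle = 0$, and therefore
\[
\Phi_f f^- = \langle f^+ - f^-, f^-\rangle = -\langle f^-, f^-\rangle.
\]
Positivity of $\Phi_f$ forces $\langle f^-, f^-\rangle \le 0$, while Lemma \ref{vp} forces $\langle f^-, f^-\rangle \ge 0$. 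Hence $\langle f^-, f^-\rangle = 0$, i.e.\ $f^- \in L^0$.

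No step looks like a serious obstacle; the one place that requires a small idea rather than a direct appeal to earlier results is (iv), and there the point is to recognize that probing $\Phi_f$ against $f^-$ isolates $\langle f^-,f^-\rangle$ (thanks to the orthogonality of $f^+$ and $f^-$), after which the positive/negative sandwich closes the argument.
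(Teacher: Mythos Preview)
Your proof is correct and essentially identical to the paper's own argument: items (i)--(iii) are handled by direct appeal to positivity, the decomposition $f=f^{+}-f^{-}$, and Lemma~\ref{key}, and for (iv) the paper likewise tests $\Phi_{f}$ against $f^{-}$, uses $\langle f^{+},f^{-}\rangle=0$ to isolate $-\langle f^{-},f^{-}\rangle$, and sandwiches to conclude $f^{-}\in L^{0}$.
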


\begin{proof}
$\mathrm{(i)}$ This follows immediately from the positivity of the
$\mathbb{V}$-valued orthosymmetric product on $L$.

$\mathrm{(ii)}$ If $f,g\in L$ then%
\begin{align*}
\Phi_{f}g  &  =\left\langle f,g\right\rangle =\left\langle f^{+}%
-f^{-},g\right\rangle =\left\langle f^{+},g\right\rangle -\left\langle
f^{-},g\right\rangle \\
&  =\Phi_{f^{+}}g-\Phi_{f^{-}}g=\left(  \Phi_{f^{+}}-\Phi_{f^{-}}\right)  g.
\end{align*}
Thus, $\Phi_{f}=\Phi_{f^{+}}-\Phi_{f^{-}}$. Moreover, $\Phi_{f^{+}}%
,\Phi_{f^{-}}\in\mathcal{L}^{+}\left(  L,\mathbb{V}\right)  $ as $f^{+}%
,f^{-}\in L^{+}$ (where we use $\mathrm{(i)}$). It follows that $\Phi_{f}$ is regular.

$\mathrm{(iii)}$ This is a direct consequence of Lemma \ref{key}.

$\mathrm{(iv)}$ If $f^{-}\in L^{0}$ then, by $\mathrm{(iii)}$, $\Phi_{f^{-}%
}=0$. Using $\mathrm{(ii)}$ then $\mathrm{(i)}$, we get $\Phi_{f}=\Phi_{f^{+}%
}\in\mathcal{L}^{+}\left(  L,\mathbb{V}\right)  $. Conversely, suppose that
$\Phi_{f}\in\mathcal{L}^{+}\left(  L,\mathbb{V}\right)  $. Then,%
\begin{align*}
0  &  \leq\Phi_{f}\left(  f^{-}\right)  =\Phi_{f^{+}}\left(  f^{-}\right)
-\Phi_{f^{-}}\left(  f^{-}\right) \\
&  =\left\langle f^{+},f^{-}\right\rangle -\left\langle f^{-},f^{-}%
\right\rangle =-\left\langle f^{-},f^{-}\right\rangle \leq0.
\end{align*}
We derive that $\left\langle f^{-},f^{-}\right\rangle =0$ so $f^{-}\in L^{0}$,
as required.
\end{proof}

As we shall see in what follows, it turns out that the set%
\[
\mathcal{M}\left(  L,\mathbb{V}\right)  =\left\{  \Phi_{f}:f\in L\right\}
\]
of all multiplication operators on the orthosymmetric space $L$ enjoys a very
interesting lattice-ordered structure.

\begin{theorem}
\label{main}Let $L$ be an orthosymmetric space. Then $\mathcal{M}\left(
L,\mathbb{V}\right)  $ is a generalized vector sublattice of $\mathcal{L}%
_{r}\left(  L,\mathbb{V}\right)  $ and the map $\Phi:L\rightarrow
\mathcal{M}\left(  L,\mathbb{V}\right)  $ defined by%
\[
\Phi f=\Phi_{f}\text{ for all }f\in L
\]
is a surjective lattice homomorphism with $\ker\Phi=L^{0}$. In particular, the
vector lattice $L/L^{0}$ and $\mathcal{M}\left(  L,\mathbb{V}\right)  $ are
lattice isomorphic.
\end{theorem}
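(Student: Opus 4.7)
My plan is to split the statement into two parts. First I will apply Lemma \ref{gene} to show that $\mathcal{M}(L,\mathbb{V})$ is a lattice-subspace of $\mathcal{L}_r(L,\mathbb{V})$ and that $\Phi$ is a lattice homomorphism onto it; the identification $L/L^0\cong\mathcal{M}(L,\mathbb{V})$ then follows from the first isomorphism theorem applied to $\ker\Phi=L^0$. Second, and this is the main obstacle, I must upgrade ``lattice-subspace'' to ``generalized vector sublattice''; Lemma \ref{gene} only produces the supremum inside $\operatorname{Im}\Phi$, so it remains to show that $\Phi_{f\vee g}$ is the least upper bound of $\{\Phi_f,\Phi_g\}$ in all of $\mathcal{L}_r(L,\mathbb{V})$.

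The first step is routine. The map $\Phi$ is linear with image $\mathcal{M}(L,\mathbb{V})$ by construction and takes values in $\mathcal{L}_r(L,\mathbb{V})$ by Lemma \ref{tech}(ii). Lemma \ref{tech}(iii) identifies $\ker\Phi$ with $L^0$, which is an ideal by Theorem \ref{neutral} and in particular a vector sublattice of $L$; the condition ``$\Phi f\in\mathcal{L}^+(L,\mathbb{V})\Rightarrow f^-\in\ker\Phi$'' of Lemma \ref{gene} is exactly Lemma \ref{tech}(iv). Lemma \ref{gene} then yields that $\Phi:L\to\mathcal{M}(L,\mathbb{V})$ is a surjective lattice homomorphism with $\ker\Phi=L^0$.

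For the generalized-sublattice claim, a translation by $\Phi_f$ reduces everything to proving that whenever $R\in\mathcal{L}_r(L,\mathbb{V})$ satisfies $R\geq 0$ and $R\geq\Phi_g$, one has $R\geq\Phi_{g^+}$. Fix $h\in L^+$ and set $a=h\wedge ng^+$, $b=(h-ng^+)^+$, so that $a+b=h$ and $a\leq ng^+$. Because $a\wedge g^-\leq ng^+\wedge g^-=0$, orthosymmetry yields $\langle g^-,a\rangle=0$ and hence $\Phi_g(a)=\langle g^+,a\rangle$; positivity of $R$ then gives
\[
Rh\geq R(a)\geq\langle g^+,a\rangle=\langle g^+,h\rangle-\langle g^+,b\rangle.
\]
The decisive estimate, which I expect to be the hard step, is $n\langle g^+,b\rangle\leq\tfrac14\langle h,h\rangle$. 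Indeed, $ng^+-a=(ng^+-h)^+$, so $(ng^+-a)\wedge b=0$ (the positive and negative parts of $ng^+-h$ are disjoint), and by orthosymmetry $\langle ng^+,b\rangle=\langle a,b\rangle$; the polarization identity together with $\langle a-b,a-b\rangle\geq 0$ from Lemma \ref{vp} gives $4\langle a,b\rangle\leq\langle a+b,a+b\rangle=\langle h,h\rangle$. Consequently $\langle g^+,h\rangle-Rh\leq\langle h,h\rangle/(4n)$ for every $n\in\mathbb{N}$, and the Archimedean hypothesis on $\mathbb{V}$ forces $Rh\geq\langle g^+,h\rangle$, finishing the proof.
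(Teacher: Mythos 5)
Your proposal is correct, and while its first half mirrors the paper (Lemma \ref{gene} applied to $\Phi$ via Lemma \ref{tech} and Theorem \ref{neutral}; note that Lemma \ref{gene} also needs positivity of $\Phi$, i.e.\ Lemma \ref{tech}(i), which you should record explicitly), the second half takes a genuinely different and more elementary route. The paper upgrades ``lattice-subspace'' to ``generalized vector sublattice'' by proving that $\Phi_{\vert f\vert}g=\sup\left\{  \vert\Phi_{f}h\vert:\vert h\vert\leq g\right\}$ in $\mathbb{V}$ and invoking the Riesz--Kantorovich formula, and it does so in three stages: the Dedekind-complete case via band projections, the Archimedean case via Dedekind completions together with the Buskes--Kusraev extension of the orthosymmetric product and a positive extension of the dominating operator, and the general case by passing to $L/L^{0}$ through Theorem \ref{quo}. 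You instead show directly that every $R\geq0$ with $R\geq\Phi_{g}$ satisfies $R\geq\Phi_{g^{+}}$: for $h\in L^{+}$ you split $h=\left(  h\wedge ng^{+}\right)  +\left(  h-ng^{+}\right)  ^{+}=a+b$, use $a\wedge g^{-}=0$ and the disjointness of $ng^{+}-a=\left(  ng^{+}-h\right)  ^{+}$ from $b$ together with orthosymmetry to get $Rh\geq\langle g^{+},h\rangle-\langle g^{+},b\rangle$ and $n\langle g^{+},b\rangle=\langle a,b\rangle\leq\tfrac{1}{4}\langle h,h\rangle$ (the last inequality by polarization, Lemma \ref{vp} and Lemma \ref{Steinberg}), and conclude with the Archimedean property of $\mathbb{V}$, exactly in the spirit of Lemma \ref{key}; the translation by $\Phi_{f}$ then shows that $\Phi_{f\vee g}$ is the least upper bound of $\left\{  \Phi_{f},\Phi_{g}\right\}$ in all of $\mathcal{L}_{r}\left(  L,\mathbb{V}\right)$, which is precisely the generalized-sublattice condition. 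All of these steps check out. What your argument buys: it is self-contained and uniform in $L$ (no Dedekind completion, no appeal to the extension theorem of Buskes--Kusraev or to positive extensions of operators, no Riesz--Kantorovich, no case distinction), and it still yields $\vert\Phi_{f}\vert=\Phi_{\vert f\vert}$ as a by-product. What the paper's route buys is the explicit pointwise description of $\Phi_{\vert f\vert}g$ as the supremum of the set $A\left(  f,g\right)$ in $\mathbb{V}$ (even attained in the Dedekind-complete case), which is slightly more information than the order-theoretic statement alone; both arguments ultimately rest on the symmetry Lemma \ref{Steinberg}, yours through polarization and the paper's through Lemma \ref{key} and Theorem \ref{quo}.
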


\begin{proof}
We have seen in Lemma \ref{tech} $\mathrm{(ii)}$ that $\mathcal{M}\left(
L,\mathbb{V}\right)  $ is contained in $\mathcal{L}_{r}\left(  L,\mathbb{V}%
\right)  $. Moreover, it is a simple exercise to check that $\mathcal{M}%
\left(  L,\mathbb{V}\right)  $ is a vector subspace of $\mathcal{L}_{r}\left(
L,\mathbb{V}\right)  $. Also, we may check directly that $\Phi$ is a linear
operator and, by Lemma \ref{tech} $\mathrm{(i)}$, $\Phi$ is positive.
Furthermore, Lemma \ref{tech} $\mathrm{(iii)}$ yields that $\ker\Phi=L^{0}$.
In particular, $\ker\Phi$ is a vector sublattice of $L$ (where we use Theorem
\ref{neutral}). Moreover, Lemma \ref{tech} $\mathrm{(iv)}$ shows that
$f^{-}\in\ker\Phi$ whenever $f\in L$ and $0\leq\Phi f\in\mathcal{M}\left(
L,\mathbb{V}\right)  $. Consequently, since $\operatorname{Im}\Phi
=\mathcal{M}\left(  L,\mathbb{V}\right)  $, all conditions of Lemma \ref{gene}
are fulfilled. So, $\mathcal{M}\left(  L,\mathbb{V}\right)  $ is a
lattice-subspace of $\mathcal{L}_{r}\left(  L,\mathbb{V}\right)  $ and $\Phi$
is a lattice homomorphism from $L$ onto $\mathcal{M}\left(  L,\mathbb{V}%
\right)  $. In particular, if $f\in L$ then the absolute value of $\Phi_{f}$
in $\mathcal{M}\left(  L,\mathbb{V}\right)  $ equals $\Phi_{\left\vert
f\right\vert }$. At this point, we claim that $\mathcal{M}\left(
L,\mathbb{V}\right)  $ is a generalized vector sublattice of $\mathcal{L}%
_{r}\left(  L,\mathbb{V}\right)  $. To this end, we shall prove that for each
$f\in L$ and $g\in L^{+}$ the set%
\[
A\left(  f,g\right)  =\left\{  \left\vert \Phi_{f}h\right\vert :h\in L\text{
and }\left\vert h\right\vert \leq g\right\}
\]
has $\Phi_{\left\vert f\right\vert }g$ as a supremum in $\mathbb{V}$. The
Riesz-Kantorovich formula (see, e.g. Theorem $1.14$ in \cite{AB06}) will thus
give the desired result.

So, let $f\in L$ and $g\in L^{+}$. If $h\in L$ with $\left\vert h\right\vert
\leq g$, then%
\[
\left\vert \Phi_{f}h\right\vert =\left\vert \left\langle f,h\right\rangle
\right\vert \leq\left\langle \left\vert f\right\vert ,\left\vert h\right\vert
\right\rangle \leq\left\langle \left\vert f\right\vert ,g\right\rangle
=\Phi_{\left\vert f\right\vert }g.
\]
That is, $\Phi_{\left\vert f\right\vert }g$ is an upper bound of $A\left(
f,g\right)  $ in $\mathbb{V}$. We now proceed into three steps.

\underline{\textsl{Step 1}}\quad Assume that $L$ is Dedekind-complete. In
particular, any band is a projection band. Let $P_{\left\vert f\right\vert }$
denote the order projection on the principal band in $L$ generated by
$\left\vert f\right\vert $. In particular, we have%
\[
\left\vert P_{\left\vert f\right\vert }g\right\vert \leq g.
\]
We derive, via an elementary calculation, that
\[
\Phi_{\left\vert f\right\vert }g=\left\langle \left\vert f\right\vert
,g\right\rangle =\left\langle f,P_{\left\vert f\right\vert }g\right\rangle
=\left\vert \left\langle f,P_{\left\vert f\right\vert }g\right\rangle
\right\vert =\left\vert \Phi_{f}\left(  P_{\left\vert f\right\vert }g\right)
\right\vert \in A\left(  f,g\right)  .
\]
It follows that the set $A\left(  f,g\right)  $ has a supremum in $\mathbb{V}$
and%
\[
\Phi_{\left\vert f\right\vert }g=\sup A\left(  f,g\right)  =\sup\left\{
\left\vert \Phi_{f}h\right\vert :h\in L\text{ and }\left\vert h\right\vert
\leq g\right\}  .
\]
This completes the first step.

\underline{\textsl{Step 2}}\quad Suppose that $L$ is Archimedean. Let
$L^{\delta}$ and $\mathbb{V}^{\delta}$ denote the Dedekind-completions of $L$
and $\mathbb{V}$, respectively. There exists a $\mathbb{V}^{\delta}$-valued
orthosymmetric product on $L^{\delta}$ which extends the $L$'s (see Theorem
$4.1$ in \cite{BK07}). The image under this product of each ordered pair
$\left(  f,g\right)  $ of elements in $L^{\delta}$ is denoted by $\left\langle
f,g\right\rangle ^{\delta}$. Furthermore, we define $\Phi_{f}^{\delta
}:L^{\delta}\rightarrow\mathbb{V}^{\delta}$ by putting%
\[
\Phi_{f}^{\delta}g=\left\langle f,g\right\rangle ^{\delta}\text{ for all }g\in
L^{\delta}.
\]
Let $T\in\mathcal{L}_{r}\left(  L,\mathbb{V}\right)  $ such that%
\[
T\geq\pm\Phi_{f}\text{ in }\mathcal{L}_{r}\left(  L,\mathbb{V}\right)  .
\]
In particular, $T$ is positive. Choose a positive extension $T^{\delta}%
\in\mathcal{L}_{r}\left(  L^{\delta},\mathbb{V}^{\delta}\right)  $ of $T$
(see, e.g., Corollary $1.5.9$ in \cite{M91}). By the first case (as
$L^{\delta}$ is Dedekind-complete), we get%
\[
T^{\delta}\geq\Phi_{\left\vert f\right\vert }\text{ in }\mathcal{L}_{r}\left(
L^{\delta},\mathbb{V}^{\delta}\right)  .
\]
Thus, if $g\in L^{+}$ then%
\[
Tg=T^{\delta}g\geq\Phi_{\left\vert f\right\vert }^{\delta}g=\left\langle
\left\vert f\right\vert ,g\right\rangle ^{\delta}=\left\langle \left\vert
f\right\vert ,g\right\rangle =\Phi_{\left\vert f\right\vert }g.
\]
This yields that%
\[
T\geq\Phi_{\left\vert f\right\vert }\text{ in }\mathcal{L}_{r}\left(
L,\mathbb{V}\right)
\]
and so%
\[
\left\vert \Phi_{f}\right\vert =\Phi_{\left\vert f\right\vert }\text{ in
}\mathcal{L}_{r}\left(  L,\mathbb{V}\right)  .
\]
We focus next on the general case.

\underline{\textsl{Step 3}}\quad Here we do not assume any extra condition on
$L$. Set%
\[
A\left(  \left[  f\right]  ,\left[  g\right]  \right)  =\left\{  \left\vert
\Phi_{f}h\right\vert :h\in L\text{ and }\left\vert \left[  h\right]
\right\vert \leq\left[  g\right]  \text{ in }L/L^{0}\right\}  .
\]
We claim that%
\[
A\left(  f,g\right)  =A\left(  \left[  f\right]  ,\left[  g\right]  \right)
.
\]
The inclusion%
\[
A\left(  f,g\right)  \subset A\left(  \left[  f\right]  ,\left[  g\right]
\right)
\]
being obvious, we prove the converse inclusion. Hence, let $h\in L$ such that
$\left\vert \left[  h\right]  \right\vert \leq\left[  g\right]  $ in $L/L^{0}%
$. From Theorem $59.1$ in \cite{LZ71}, it follows that there $k\in L^{0}$ such
that $\left\vert k\right\vert \leq\left\vert h\right\vert $ and $\left\vert
h-k\right\vert \leq\left\vert g\right\vert $. But then%
\[
\left\vert \Phi_{f}h\right\vert =\left\vert \Phi_{f}\left(  h-k\right)
\right\vert \in A\left(  f,g\right)
\]
and the required inclusion follows. Theorem \ref{quo} together with the
Archimedean case leads to%
\begin{align*}
\Phi_{\left\vert f\right\vert }g  &  =\left\langle \left\vert f\right\vert
,g\right\rangle =\left\langle \left[  \left\vert f\right\vert \right]
,\left[  g\right]  \right\rangle =\left\langle \left\vert \left[  f\right]
\right\vert ,\left[  g\right]  \right\rangle \\
&  =\Phi_{\left\vert \left[  f\right]  \right\vert }\left[  g\right]  =\sup
A\left(  \left[  f\right]  ,\left[  g\right]  \right)  =\sup A\left(
f,g\right)  .
\end{align*}
This completes the proof of theorem.
\end{proof}

In what follows we shall discuss an example which illustrates Theorem
\ref{main}.

\begin{example}
\label{kaplan}Let $C\left(  \mathbb{N}\right)  $ be the Archimedean vector
lattice of all sequences of real numbers. The vector sublattice of $C\left(
\mathbb{N}\right)  $ of all bounded sequences is denoted by $C^{\ast}\left(
\mathbb{N}\right)  $ \emph{(}here, we follow notations from \emph{\cite{GJ76}%
)}. Define $T\in\mathcal{L}_{r}\left(  C^{\ast}\left(  \mathbb{N}\right)
,C\left(  \mathbb{N}\right)  \right)  $ by%
\[
\left(  Tf\right)  \left(  n\right)  =\left\{
\begin{array}
[c]{l}%
f\left(  n\right)  -f\left(  n+1\right)  \text{ if }n\in\mathbb{N}\text{ is
odd}\\
\\
0\text{ if }n\in\mathbb{N}\text{ is even}%
\end{array}
\right.  \text{ for all }f\in C^{\ast}\left(  \mathbb{N}\right)  .
\]
Since $C\left(  \mathbb{N}\right)  $ is Dedekind-complete, $\mathcal{L}%
_{r}\left(  C^{\ast}\left(  \mathbb{N}\right)  ,C\left(  \mathbb{N}\right)
\right)  $ is a vector lattice and thus $T$ has an absolute value $\left\vert
T\right\vert $. We intend to calculate $\left\vert T\right\vert $. Consider
$u,v\in C^{\ast}\left(  \mathbb{N}\right)  $ with%
\[
u=\left\{
\begin{array}
[c]{l}%
1\text{ if }n\text{ is odd}\\
\\
0\text{ if }n\text{ is even}%
\end{array}
\right.  \text{ and }v=\left\{
\begin{array}
[c]{l}%
0\text{ if }n\text{ is odd}\\
\\
1\text{ if }n\text{ is even.}%
\end{array}
\right.
\]
Moreover, define the shift operator $S\in\mathcal{L}^{+}\left(  C^{\ast
}\left(  \mathbb{N}\right)  ,C\left(  \mathbb{N}\right)  \right)  $ by putting%
\[
\left(  Sf\right)  \left(  n\right)  =f\left(  n+1\right)  \text{ for all
}f\in C^{\ast}\left(  \mathbb{N}\right)  \text{ and }n\in\mathbb{N}.\text{ }%
\]
Also, if $f,g\in C^{\ast}\left(  \mathbb{N}\right)  $ then $fg$ is defined by%
\[
\left(  fg\right)  \left(  n\right)  =f\left(  n\right)  g\left(  n\right)
\text{ for all }f\in C^{\ast}\left(  \mathbb{N}\right)  \text{ and }%
n\in\mathbb{N}.
\]
Then, it is readily checked that the formula%
\[
\left\langle f,g\right\rangle =ufg+S\left(  vfg\right)  \text{ for all }f,g\in
L.
\]
makes $C^{\ast}\left(  \mathbb{N}\right)  $ into an orthosymmetric space over
$C\left(  \mathbb{N}\right)  $. An easy calculation yields that%
\[
Tf=\left\langle u-v,f\right\rangle =\Phi_{u-v}f\text{ for all }f\in L.
\]
By \emph{Theorem \ref{main}}, we derive that $T$ has an absolute value in
$\mathcal{L}_{r}\left(  C^{\ast}\left(  \mathbb{N}\right)  ,C\left(
\mathbb{N}\right)  \right)  $ which is given by%
\[
\left\vert T\right\vert =\left\vert \Phi_{u-v}\right\vert =\Phi_{\left\vert
u-v\right\vert }=\Phi_{u+v}.
\]
That is,%
\[
\left(  \left\vert T\right\vert f\right)  \left(  n\right)  =\left\{
\begin{array}
[c]{l}%
f\left(  n\right)  +f\left(  n+1\right)  \text{ if }n\in\mathbb{N}\text{ is
odd}\\
\\
0\text{ if }n\in\mathbb{N}\text{ is even}%
\end{array}
\right.  \text{ for all }f\in C^{\ast}\left(  \mathbb{N}\right)  .
\]
By the way, let $C\left(  \mathbb{N}^{\ast}\right)  $ be the vector sublattice
of $C\left(  \mathbb{N}\right)  $ of all convergent sequences \emph{(}where
$\mathbb{N}^{\ast}$ denote the point-one compactification of $\mathbb{N}%
$\emph{)}. Also, $T$ leaves $C\left(  \mathbb{N}^{\ast}\right)  $ invariant
and thus $T$ can be considered as an element of $\mathcal{L}_{r}\left(
C\left(  \mathbb{N}^{\ast}\right)  ,C\left(  \mathbb{N}^{\ast}\right)
\right)  $. Using an example by Kaplan in \emph{\cite{K73}} \emph{(}see also
\emph{Example} $1.17$ in \emph{\cite{AB06})}, it turns out that $T$ has no
absolute value in $\mathcal{L}_{r}\left(  C\left(  \mathbb{N}^{\ast}\right)
,C\left(  \mathbb{N}^{\ast}\right)  \right)  $.
\end{example}

The following consequence of Theorem \ref{main} is straightforward but worth
talking about. First recall that the orthosymmetric space is definite if
$L^{0}$ is trivial.

\begin{corollary}
Let $L$ be a definite orthosymmetric space. The map $T:L\rightarrow
\mathcal{M}\left(  L,\mathbb{V}\right)  $ defined by%
\[
\Phi f=\Phi_{f}\text{ for all }f\in L
\]
is a lattice isomorphism.
\end{corollary}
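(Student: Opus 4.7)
The plan is to derive this corollary as an immediate specialization of Theorem \ref{main}, since essentially all the work has already been done.

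Recall that by definition, $L$ being definite means that the neutral part $L^{0}$ is trivial, i.e., $L^{0}=\{0\}$. Theorem \ref{main} asserts in full generality that the map $\Phi:L\to\mathcal{M}(L,\mathbb{V})$ sending $f\mapsto\Phi_{f}$ is a surjective lattice homomorphism with $\ker\Phi=L^{0}$. So the first step is simply to invoke Theorem \ref{main} and observe that, under definiteness, $\ker\Phi=L^{0}=\{0\}$, which immediately gives injectivity.

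The second step is to combine the three pieces: $\Phi$ is linear (hence a vector space homomorphism), it is a lattice homomorphism, it is surjective, and it is now injective. A bijective lattice homomorphism between vector lattices is by definition a lattice isomorphism, so the conclusion follows at once. Note that the target $\mathcal{M}(L,\mathbb{V})$ is a genuine (generalized) vector sublattice of $\mathcal{L}_{r}(L,\mathbb{V})$ by Theorem \ref{main}, so the statement that $\Phi$ is a lattice isomorphism makes sense as an identification of two vector lattices.

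There is essentially no obstacle here: the corollary is a clean translation of Theorem \ref{main} under the extra hypothesis $L^{0}=\{0\}$, and the only thing to note for the reader is that the definiteness of $L$ is exactly what upgrades the surjective lattice homomorphism $\Phi$ to a lattice isomorphism by killing its kernel. (Incidentally, Proposition \ref{arch} then guarantees that both $L$ and $\mathcal{M}(L,\mathbb{V})$ are Archimedean vector lattices, which is a pleasant by-product but is not needed for the proof.)
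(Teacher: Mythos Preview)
Your proposal is correct and matches the paper's intended approach: the paper presents this corollary without proof, stating only that it is a ``straightforward'' consequence of Theorem~\ref{main}, which is exactly what you have spelled out by combining the surjective lattice homomorphism $\Phi$ with $\ker\Phi=L^{0}=\{0\}$ under definiteness.
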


Roughly speaking, any definite orthosymmetric space $L$ can be embedded in
$\mathcal{L}_{r}\left(  L,\mathbb{V}\right)  $ as a generalized vector
sublattice. In particular, if $\mathbb{V}$ is in addition Dedekind-complete,
then $L$ has a vector sublattice copy in the Dedekind-complete $\mathcal{L}%
_{r}\left(  L,\mathbb{V}\right)  $. For instance, any definite orthosymmetric
space over $\mathbb{R}$ can be considered as a vector sublattice of its order dual.

\section{Adjoint operators on orthosymmetric spaces}

Also in this section, \textsl{all given orthosymmetric spaces are over the
Archimedean vector lattice} $\mathbb{V}$. Let $L,M$ be two orthosymmetric
spaces. The ordered vector space of all (linear) operators from $L$ into $M$
is denoted by $\mathcal{L}_{\mathrm{r}}\left(  L,M\right)  $ and by
$\mathcal{L}_{\mathrm{r}}\left(  L\right)  $ if $L=M$. Recall that we call a
\textsl{positive orthomorphism} on $L$ any positive operator $T\in
\mathcal{L}_{\mathrm{r}}\left(  L\right)  $ for which%
\[
f\wedge Tg=0\text{ for all }f,g\in L\text{ with }f\wedge g=0.
\]
An operator $T\in\mathcal{L}_{\mathrm{r}}\left(  L\right)  $ is called an
\textsl{orthomorphism}\textit{ }if $T=R-S$ for some positive orthomorphisms
$R,S$ on $L$. The set of all orthomorphisms on $L$ is denoted by
$\mathrm{Orth}\left(  L\right)  $. For orthomorphisms, the reader can consult
the Thesis \cite{P81} or Chapter $20$ in \cite{Z83} (further results can be
found in \cite{HP86,HP82,HP84,P84}).

It turns out that orthomorphisms on orthosymmetric spaces over the same
Archimedean vector lattice have an interesting property.

\begin{theorem}
\label{orth}Let $L$ be an orthosymmetric space. Then%
\[
\left\langle f,Tg\right\rangle =\left\langle Tf,g\right\rangle \text{ for all
}T\in\mathrm{Orth}\left(  L\right)  \text{ and }f,g\in L.
\]

\end{theorem}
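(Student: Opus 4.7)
The plan is to show that, for a fixed positive orthomorphism $T$, the map $\langle f,g\rangle_T := \langle f, Tg\rangle$ is itself a $\mathbb{V}$-valued orthosymmetric product on $L$, so that Lemma \ref{Steinberg} forces it to be symmetric. One then bootstraps from positive orthomorphisms to all of $\mathrm{Orth}(L)$ via the decomposition $T = R - S$.

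First I would verify that $\langle \cdot,\cdot\rangle_T$ is bilinear (inherited from the bilinearity of the original product together with the linearity of $T$). Positivity: if $f,g \in L^{+}$ then $Tg \in L^{+}$ by positivity of $T$, hence $\langle f, Tg\rangle \in \mathbb{V}^{+}$. Orthosymmetry is where the orthomorphism hypothesis does its essential work: if $f\wedge g=0$ in $L$ (which forces $f,g\in L^{+}$), then the defining property of a positive orthomorphism yields $f\wedge Tg = 0$, and orthosymmetry of the original product gives $\langle f, Tg\rangle = 0$. Thus $\langle \cdot,\cdot\rangle_T$ is a $\mathbb{V}$-valued orthosymmetric product on $L$, and Lemma \ref{Steinberg} applied to it gives
\[
\langle f, Tg\rangle \;=\; \langle f, g\rangle_T \;=\; \langle g, f\rangle_T \;=\; \langle g, Tf\rangle
\]
for all $f, g \in L$. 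A second application of Lemma \ref{Steinberg}, now to the original product, rewrites the right-hand side as $\langle Tf, g\rangle$, so $\langle f, Tg\rangle = \langle Tf, g\rangle$ whenever $T$ is a positive orthomorphism.

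For an arbitrary $T \in \mathrm{Orth}(L)$, write $T = R - S$ with $R, S$ positive orthomorphisms and use bilinearity of the original product in each slot:
\[
\langle f, Tg\rangle = \langle f, Rg\rangle - \langle f, Sg\rangle = \langle Rf, g\rangle - \langle Sf, g\rangle = \langle Tf, g\rangle.
\]

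I do not foresee a genuine obstacle; the argument is essentially a one-line application of the symmetry lemma to an auxiliary product. The only point requiring care is the observation that the defining property of a positive orthomorphism (preservation of disjointness) is precisely what is needed to make $\langle \cdot, T\cdot\rangle$ satisfy the orthosymmetry axiom, so that the highly nontrivial Lemma \ref{Steinberg} (which here absorbs all the analytic difficulty coming from the Archimedean hypothesis on $\mathbb{V}$) becomes applicable.
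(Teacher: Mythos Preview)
Your proof is correct and follows essentially the same route as the paper's own argument: define the auxiliary product $\langle f,g\rangle_T=\langle f,Tg\rangle$ for a positive orthomorphism $T$, observe it is again a $\mathbb{V}$-valued orthosymmetric product, and invoke Lemma~\ref{Steinberg} (twice) to obtain symmetry, then extend to general $T\in\mathrm{Orth}(L)$ by linearity. You are in fact slightly more explicit than the paper in verifying the orthosymmetry axiom and in spelling out the reduction $T=R-S$.
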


\begin{proof}
Obviously, we can prove the formula only for positive orthomorphisms. Hence,
let $T$ be a positive orthomorphism on $L$ and define a bilinear map which
assigns the vector%
\[
\left\langle f,g\right\rangle _{T}=\left\langle f,Tg\right\rangle
\in\mathbb{V}%
\]
to each ordered pair $\left(  f,g\right)  \in L\times L$. Clearly, the above
formula defines a new $\mathbb{V}$-valued orthosymmetric product on $L$. In
view of Lemma \ref{Steinberg}, we conclude that if $f,g\in L$ then%
\[
\left\langle f,Tg\right\rangle =\left\langle f,g\right\rangle _{T}%
=\left\langle g,f\right\rangle _{T}=\left\langle g,Tf\right\rangle
=\left\langle Tf,g\right\rangle
\]
and the theorem follows.
\end{proof}

Theorem \ref{orth} motivates us to introduce the following concept. We say
that $T\in\mathcal{L}_{\mathrm{r}}\left(  L,M\right)  $ has an
\textsl{adjoint}\textit{ }in $\mathcal{L}_{\mathrm{r}}\left(  M,L\right)  $ if
there is $S\in\mathcal{L}_{\mathrm{r}}\left(  M,L\right)  $ for which%
\[
\left\langle Tf,g\right\rangle =\left\langle f,Sg\right\rangle \text{ for all
}f\in L\text{ and }g\in M.
\]
The subset of $\mathcal{L}_{\mathrm{r}}\left(  M,L\right)  $ of all adjoints
of $T\in\mathcal{L}_{\mathrm{r}}\left(  L,M\right)  $ is denoted by
$\mathrm{adj}\left(  T\right)  $. The operator $T\in\mathcal{L}_{\mathrm{r}%
}\left(  L\right)  $ is said to be \textsl{selfadjoint} if $T\in
\mathrm{adj}\left(  T\right)  $. It follows from Theorem \ref{orth} that any
orthomorphism on $L$ is selfadjoint. Of course, the converse is not valid,
i.e., a selfadjoint operator in $\mathcal{L}_{\mathrm{r}}\left(  L\right)  $
need not be an orthomorphism. Consider the orthosymmetric space $\mathbb{R}%
^{2}$ as defined in Example \ref{Euclidean}. The operator $T\in\mathcal{L}%
_{\mathrm{r}}\left(  \mathbb{R}^{2}\right)  $ given by the $2\times2$ matrix%
\[
T=\left(
\begin{array}
[c]{cc}%
1 & 2\\
2 & 0
\end{array}
\right)
\]
is selfadjoint but fails to be an orthomorphism. Recall by the way that any
orthomorphism on the vector lattice $\mathbb{R}^{n}$ ($n\in\mathbb{N}$) is
given by a diagonal matrix as shown in \cite[Exercice $141.7$]{Z83}. Next, we
shall show \textit{via }an example that $\mathrm{adj}\left(  T\right)  $ can
be empty.

\begin{example}
Put $L=\mathbb{V}=C\left[  -1,1\right]  $, where $C\left[  -1,1\right]  $ is
the Archimedean vector lattice of all real-valued continuous functions on the
real interval $\left[  0,1\right]  $. It is not hard to see that $L$ is an
orthosymmetric space over $\mathbb{V}$ under the $\mathbb{V}$-valued
orthosymmetric product given by%
\[
\left\langle f,g\right\rangle =fg\text{ for all }f,g\in L.
\]
Now, define $T\in\mathcal{L}_{\mathrm{r}}\left(  L\right)  $ by%
\[
\left(  Tf\right)  \left(  x\right)  =x\int_{-1}^{1}f\left(  s\right)
\mathrm{d}s\text{ for all }f\in L\text{ and }x\in\left[  -1,1\right]  .
\]
Suppose that $\mathrm{adj}\left(  T\right)  $ contains $R$. Let $f\in L$ and
put%
\[
g\left(  x\right)  =x\text{ for all }x\in\left[  -1,1\right]  .
\]
Hence, $Tg=0$ and so%
\[
x\left(  Rf\right)  \left(  x\right)  =\left\langle Rf,g\right\rangle
=\left\langle f,Tg\right\rangle =0\text{ for all }x\in\left[  -1,1\right]  .
\]
It follows that $R=0$ and thus%
\[
Tf=\left\langle Tf,\mathbf{1}\right\rangle =\left\langle f,R\mathbf{1}%
\right\rangle =0\text{ for all }f\in L
\]
\emph{(}here, $\mathbf{1}$ is the constant function whose is the constant
$1$\emph{)}. This is an obvious absurdity and thus $\mathrm{adj}\left(
T\right)  $ is empty.
\end{example}

Next, we provide an example in which we shall see that $\mathrm{adj}\left(
T\right)  $ may contain more than one element.

\begin{example}
Again, we put $L=\mathbb{V}=C\left[  -1,1\right]  $. For each $f,g\in L$ we
set%
\[
\left\langle f,g\right\rangle \left(  x\right)  =\left\{
\begin{array}
[c]{l}%
x%
%TCIMACRO{\dint _{-1}^{0}}%
%BeginExpansion
{\displaystyle\int_{-1}^{0}}
%EndExpansion
\left(  fg\right)  \left(  s\right)  \mathrm{d}s\text{\quad if }x\in\left[
0,1\right] \\
\\
0\text{\quad if }x\in\left[  -1,0\right]  .
\end{array}
\right.
\]
It is an easy task to check that this formula makes $L$ into an orthosymmetric
space over $\mathbb{V}$. Define $T\in\mathcal{L}_{\mathrm{r}}\left(  L\right)
$ by%
\[
\left(  Tf\right)  \left(  x\right)  =x\int_{-1}^{0}f\left(  s\right)
\mathrm{d}s\text{ for all }f\in L\text{ and }x\in\left[  -1,1\right]  .
\]
Moreover, consider $R,S\in\mathcal{L}_{\mathrm{r}}\left(  L\right)  $ such
that%
\[
\left(  Rf\right)  \left(  x\right)  =\int_{-1}^{0}sf\left(  s\right)
\mathrm{d}s\text{ and }Sf=fw+Rf\text{ for all }f\in L\text{ and }x\in\left[
-1,1\right]  ,
\]
where%
\[
w\left(  x\right)  =0\text{ if }x\in\left[  -1,0\right]  \text{ and }w\left(
x\right)  =x\text{ if }x\in\left[  0,1\right]  .
\]
A direct calculation reveals that%
\[
\left\langle f,Tg\right\rangle =\left\langle Rf,g\right\rangle =\left\langle
Sf,g\right\rangle \text{ for all }f,g\in L.
\]
Hence, $R,S\in\mathrm{adj}\left(  T\right)  $ and $R\neq S$.
\end{example}

The fact that an operator $T\in\mathcal{L}_{\mathrm{r}}\left(  L,M\right)  $
could have more than one adjoint is rather inconvenient. In order to get
around the problem and thus make our theory more or less reasonable,
\textit{we shall assume from now on that the codomain orthosymmetric space
}$M$\textit{ is definite}. Recall here that the orthosymmetric space $M$ is
definite if $0$ is the only neutral element in $M$. This means that%
\[
M^{0}=\left\{  g\in M:\left\langle g,g\right\rangle =0\right\}  =\left\{
0\right\}  .
\]
As we shall prove next, an operator on a definite orthosymmetric space has at
most one adjoint.

\begin{proposition}
Let $L,M$ be orthosymmetric spaces with $M$ definite. If $T\in\mathcal{L}%
_{\mathrm{r}}\left(  L,M\right)  $ then $\mathrm{adj}\left(  T\right)  $ has
at most one point.
\end{proposition}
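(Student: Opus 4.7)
The plan is to take two candidate adjoints $R, S \in \mathrm{adj}(T)$ and show that $Rg = Sg$ for every $g \in M$. Subtracting the two defining identities, for every $f \in L$ and $g \in M$ one obtains
\[
\langle f, (R-S)g\rangle \;=\; \langle Tf, g\rangle - \langle Tf, g\rangle \;=\; 0.
\]
Fixing $g$ and setting $h := (R-S)g \in L$, this says $\langle f, h\rangle = 0$ for every $f \in L$. By Lemma \ref{Steinberg} the $\mathbb{V}$-valued product on $L$ is symmetric, so in particular $\langle h, f\rangle = 0$ for every $f \in L$, and Lemma \ref{key} then places $h$ in the neutral part $L^0$.

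To turn this into $h = 0$, I would transport the information into $M$ and invoke the assumed definiteness of $M$. Substituting $f := h$ back into the adjoint identity for $R$ and using $h \in L^0$ together with Lemma \ref{key} to kill the right-hand pairing gives
\[
\langle Th, g'\rangle \;=\; \langle h, Rg'\rangle \;=\; 0 \quad \text{for every } g' \in M.
\]
A second application of Lemma \ref{key}, this time inside the orthosymmetric space $M$, forces $Th \in M^0$, and definiteness of $M$ delivers $Th = 0$.

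The main obstacle is closing the gap between the two pieces of information collected, namely $h \in L^0$ and $Th = 0$, and the desired equality $h = 0$. I expect the decisive input to come either from Theorem \ref{main} (which identifies the multiplication operators $\Phi_h$ with the quotient $L/L^0$ and so makes vanishing of the functional $f \mapsto \langle f, h\rangle$ rigid), or from an iterated use of the bilinearity and symmetry of the product with $f$ of the form $Rg'$ or $Sg'$, which should promote the relation $(R-S)g \in L^0$ to $(R-S)g = 0$ directly. This is the step I would spend the most effort on, since the preliminary reductions are essentially forced by the adjoint identity.
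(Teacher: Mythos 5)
Your first paragraph is correct and, in substance, it already reproduces everything the paper's own argument establishes: from $\langle f,(R-S)g\rangle=0$ for all $f\in L$ you obtain, via Lemma \ref{Steinberg} and Lemma \ref{key}, that $(R-S)g\in L^{0}$. The gap you then flag is genuine, and you should stop trying to close it: neither Theorem \ref{main} nor any iteration of bilinearity, nor the auxiliary fact $T\bigl((R-S)g\bigr)=0$, will upgrade $(R-S)g\in L^{0}$ to $(R-S)g=0$, because the statement as printed is false. Take $\mathbb{V}=M=\mathbb{R}$ with $\langle a,b\rangle=ab$ (definite), let $L=\mathbb{R}^{2}$ with the lexicographic order and $\langle (x_{1},x_{2}),(y_{1},y_{2})\rangle=x_{1}y_{1}$ as in Example \ref{lexi}, so that $L^{0}=\{0\}\times\mathbb{R}\neq\{0\}$, and let $T(x_{1},x_{2})=x_{1}$, a positive operator from $L$ to $M$. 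For every $c\in\mathbb{R}$ the positive operator $R_{c}:M\rightarrow L$ given by $R_{c}a=(a,ca)$ satisfies $\langle f,R_{c}a\rangle=x_{1}a=\langle Tf,a\rangle$, so $\mathrm{adj}(T)$ is infinite even though $M$ is definite.

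The source of the trouble is a mismatch between the statement and the paper's proof. The adjoints $R,S$ map $M$ into $L$, so the difference that must be shown to vanish lives in $L$; when the paper writes $\langle Rf-Sf,g\rangle=0$ and concludes ``since $M$ is definite, $Rf-Sf=0$,'' it has silently interchanged the roles of the two spaces, and the definiteness actually being used is that of $L$, the codomain of the adjoint. With the hypothesis corrected to ``$L$ definite'' (or ``both spaces definite,'' which is what the rest of the section effectively needs), your opening computation is already a complete proof: $(R-S)g\in L^{0}=\{0\}$ for every $g\in M$, hence $R=S$. So the right resolution is not to find the missing step but to relocate the definiteness assumption, after which the argument terminates immediately.
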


\begin{proof}
Suppose that $R,S\in\mathrm{adj}\left(  T\right)  $. Let $f\in L$ and $g\in
M$. Observe that%
\begin{align*}
\left\langle Rf-Sf,g\right\rangle  &  =\left\langle Rf,g\right\rangle
-\left\langle Sf,g\right\rangle \\
&  =\left\langle f,Tg\right\rangle -\left\langle f,Tg\right\rangle =0.
\end{align*}
Since $g$ are arbitrary in $M$ and $M$ is definite, we get%
\[
Rf-Sf=0\text{ for all }f\in L.
\]
This means that $R=S$ and we are done.
\end{proof}

As it would be expected, if $\mathrm{adj}\left(  T\right)  $ is non-empty,
then its unique element is called the \textsl{adjoint}\textit{ }of $T$ and
denoted by $T^{\ast}$. In this situation, we have%
\[
T^{\ast}\in\mathcal{L}_{\mathrm{r}}\left(  M,L\right)  \text{ and
}\left\langle Tf,g\right\rangle =\left\langle f,T^{\ast}g\right\rangle \text{
for all }f\in L\text{ and }g\in M.
\]
A first property of adjoint operators is given below.

\begin{proposition}
\label{pospos}Let $L,M$ be orthosymmetric spaces with $M$ definite and
$T\in\mathcal{L}_{\mathrm{r}}\left(  L,M\right)  $ such that $T^{\ast}%
\in\mathcal{L}_{\mathrm{r}}\left(  M,L\right)  $ exists. If $T$ is positive
then so is $T^{\ast}$.
\end{proposition}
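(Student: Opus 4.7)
The plan is to fix an arbitrary $g\in M^{+}$, set $h:=T^{\ast}g\in L$, and prove that its negative part $h^{-}$ vanishes; since $h=h^{+}-h^{-}$, this will give $T^{\ast}g=h^{+}\geq 0$, and varying $g$ proves $T^{\ast}\geq 0$.

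The key step is to substitute $f=h^{-}\in L^{+}$ into the defining identity of the adjoint:
\[
\langle Th^{-},g\rangle =\langle h^{-},T^{\ast}g\rangle =\langle h^{-},h^{+}\rangle -\langle h^{-},h^{-}\rangle .
\]
Orthosymmetry forces $\langle h^{-},h^{+}\rangle =0$ (because $h^{+}\wedge h^{-}=0$), so the identity collapses to $\langle Th^{-},g\rangle =-\langle h^{-},h^{-}\rangle $. Now positivity of $T$ together with $h^{-}\in L^{+}$ gives $Th^{-}\in M^{+}$, and then $g\in M^{+}$ and the positivity axiom for the product on $M$ place the left-hand side in $\mathbb{V}^{+}$. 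Meanwhile Lemma \ref{vp} places $\langle h^{-},h^{-}\rangle $ itself in $\mathbb{V}^{+}$, so the right-hand side lies in $-\mathbb{V}^{+}$. Consequently $\langle h^{-},h^{-}\rangle =0$, i.e.\ $h^{-}\in L^{0}$.

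The main obstacle---and really the only nontrivial point---is to promote $h^{-}\in L^{0}$ to $h^{-}=0$. This is where the standing definiteness hypothesis enters (exactly as it underlies the preceding uniqueness proposition for adjoints): since the pertinent neutral part is trivial, $h^{-}=0$, and hence $T^{\ast}g=h^{+}\in L^{+}$, which finishes the proof.
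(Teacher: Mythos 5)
Your proof is correct and is essentially the paper's own argument, just unwound: the paper first records the identity $\Phi_{T^{\ast}g}=\Phi_{g}\circ T$, notes that the right-hand side is positive, and then invokes Lemma \ref{tech}~(iv), whose proof is exactly your computation $0\leq\left\langle Th^{-},g\right\rangle=-\left\langle h^{-},h^{-}\right\rangle$ with $h=T^{\ast}g$. The only point to watch---and you inherit it verbatim from the paper---is the final step: what the computation yields is $h^{-}\in L^{0}$, so the ``pertinent neutral part'' is that of $L$, whereas the standing hypothesis is that $M$ is definite; the paper's proof writes $(T^{\ast}g)^{-}\in M^{0}=\{0\}$ at the corresponding spot, which is the same conflation of $L$ with $M$. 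So your argument closes exactly when the paper's does, namely when $L^{0}$ is trivial; it would be worth making that dependence explicit rather than attributing it to the definiteness of $M$.
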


\begin{proof}
Let $f\in L$ and $g\in M$. Keeping the same notation as previously used in
Section $2$, we can write%
\[
\Phi_{T^{\ast}g}f=\left\langle f,T^{\ast}g\right\rangle =\left\langle
Tf,g\right\rangle =\Phi_{g}\left(  Tf\right)  =\left(  \Phi_{g}\circ T\right)
g.
\]
Hence,%
\[
\Phi_{T^{\ast}g}=\Phi_{g}\circ T\text{ for all }g\in L.
\]
Now, assume that $T$ is positive and let $g\in M^{+}$. We claim that $T^{\ast
}g\in M^{+}$. Since $g\in M^{+}$, the operator $\Phi_{g}$ is positive (see
Lemma \ref{tech} $\mathrm{(i)}$). Hence, $\Phi_{g}\circ T$ is positive and so
is $\Phi_{T^{\ast}g}$. But then%
\[
\left(  T^{\ast}g\right)  ^{-}\in M^{0}=\left\{  0\right\}
\]
as $M$ is definite (where we use Lemma \ref{tech} $\mathrm{(iv)}$). It follows
that $T^{\ast}g\in M^{+}$ which leads to the desired result.
\end{proof}

Now, we shall focus on lattice homomorphisms from $L$ into $M$. Recall that
$T\in\mathcal{L}_{\mathrm{r}}\left(  L,M\right)  $ is a \textsl{lattice
homomorphism}\textit{ }if%
\[
\left\vert Tf\right\vert =T\left\vert f\right\vert \quad\text{for all }f\in
L.
\]
Next, we obtain a (quite amazing) characterization of lattice homomorphisms in
term of adjoint.

\begin{theorem}
\label{riesz}Let $L,M$ be orthosymmetric spaces with $M$ definite and
$T\in\mathcal{L}_{\mathrm{r}}\left(  L,M\right)  $ such that $T$ is positive
and $T^{\ast}\in\mathcal{L}_{\mathrm{r}}\left(  M,L\right)  $ exists. Then $T$
is a lattice homomorphism if and only if $T^{\ast}T\in\mathrm{Orth}\left(
L\right)  $.
\end{theorem}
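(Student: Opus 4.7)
I would prove each direction separately, leaning on two standing tools. First, Proposition~\ref{pospos} makes $T^{\ast}$ positive and hence $T^{\ast}T$ a positive operator on $L$. Second, I will repeatedly use the observation that in any \emph{definite} orthosymmetric space $N$, if $u,v\in N^{+}$ satisfy $\langle u,v\rangle=0$, then $u\wedge v=0$: indeed, $w:=u\wedge v\geq 0$ obeys $0\leq\langle w,w\rangle\leq\langle u,v\rangle=0$, so $w\in N^{0}=\{0\}$. Applied to $N=M$, this single principle carries the whole converse direction; applied to a quotient in the forward direction, it is where the real work sits.

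For the converse ($T^{\ast}T\in\mathrm{Orth}(L)\Rightarrow T$ lattice homomorphism), I take $f,g\in L^{+}$ with $f\wedge g=0$. The orthomorphism property yields $f\wedge T^{\ast}Tg=0$; orthosymmetry in $L$ converts this to $\langle f,T^{\ast}Tg\rangle=0$, and the adjoint identity rewrites it as $\langle Tf,Tg\rangle=0$ in $M$. Since $M$ is definite and $Tf,Tg\in M^{+}$, the standing observation forces $Tf\wedge Tg=0$ in $M$. Specializing $f=\varphi^{+}$, $g=\varphi^{-}$ for an arbitrary $\varphi\in L$ then gives $T\varphi^{+}\wedge T\varphi^{-}=0$, whence $|T\varphi|=T\varphi^{+}+T\varphi^{-}=T|\varphi|$, so $T$ is a lattice homomorphism.

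For the forward direction, I take $f,g\in L$ with $f\wedge g=0$. The lattice-homomorphism property gives $Tf\wedge Tg=0$ in $M$; orthosymmetry in $M$ then yields $\langle Tf,Tg\rangle=0$, and the adjoint identity produces $\langle f,T^{\ast}Tg\rangle=0$ in $\mathbb{V}$. Setting $h:=f\wedge T^{\ast}Tg\geq 0$ (note $T^{\ast}Tg\geq 0$ by positivity), the squeeze $0\leq\langle h,h\rangle\leq\langle f,T^{\ast}Tg\rangle=0$ places $h$ in $L^{0}$.

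The main obstacle is now to promote $h\in L^{0}$ to $h=0$, which is automatic only when $L$ itself is definite. My plan for the general case is to pass to $L/L^{0}$, which is definite by Theorem~\ref{quo}. A preparatory step is the inclusion $L^{0}\subseteq\ker T$: for $k\in L^{0}$ and any $v\in M$, $\langle Tk,v\rangle=\langle k,T^{\ast}v\rangle=0$, and $M$ being definite forces $Tk\in M^{0}=\{0\}$. Hence $T$ descends to a positive lattice homomorphism $\widetilde{T}\colon L/L^{0}\to M$ whose adjoint is $\widetilde{T}^{\ast}g=[T^{\ast}g]$, and $T^{\ast}T$ annihilates $L^{0}$; combining the definite-case conclusion in the quotient with the bound $h\leq f$ and the ideal structure of $L^{0}$ (Theorem~\ref{neutral}) is the final delicate step that closes the forward direction.
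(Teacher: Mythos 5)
Your converse direction is complete, correct, and essentially the paper's own argument: from $f\wedge g=0$ you get $f\wedge T^{\ast}Tg=0$, hence $\langle Tf,Tg\rangle=\langle f,T^{\ast}Tg\rangle=0$, and the squeeze $0\le\langle Tf\wedge Tg,Tf\wedge Tg\rangle\le\langle Tf,Tg\rangle$ together with definiteness of $M$ (correctly applied, since $Tf\wedge Tg$ lies in $M$) gives $Tf\wedge Tg=0$; your passage from disjointness preservation to $|T\varphi|=T|\varphi|$ is standard and you spell it out.

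The forward direction is where the gap is, and you have located it more precisely than the paper does. The paper reaches $\langle h,h\rangle=0$ for $h:=(T^{\ast}T)f\wedge g\in L$ and then concludes $h=0$ ``since $M$ is definite''; this is a non sequitur, because $h$ is an element of $L$, not of $M$. You correctly observe that the squeeze only yields $h\in L^{0}$. However, your proposed repair via $L/L^{0}$ cannot close the gap: the quotient is definite and $\widetilde{T}^{\ast}\widetilde{T}$ is indeed an orthomorphism on $L/L^{0}$, but unwinding that statement gives exactly $f\wedge T^{\ast}Tg\in L^{0}$, which is what you already had; no combination with $h\le f$ and the ideal property of $L^{0}$ will upgrade $[h]=[0]$ to $h=0$. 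In fact the implication fails without a further hypothesis. Take $L=\mathbb{R}^{2}$ with the coordinatewise order and $\langle f,g\rangle=f_{1}g_{1}$ for $f=(f_{1},f_{2})$, $g=(g_{1},g_{2})$, so that $L^{0}=\{0\}\times\mathbb{R}$; take $M=\mathbb{R}$ with its usual product, $Tf=f_{1}$ (a positive lattice homomorphism), and $Sg=(g,g)$. Then $\langle f,Sg\rangle=f_{1}g=\langle Tf,g\rangle$, so $S\in\mathrm{adj}(T)$, yet $ST(1,0)=(1,1)$ and $(0,1)\wedge(1,1)=(0,1)\neq 0$ although $(0,1)\wedge(1,0)=0$; hence $ST\notin\mathrm{Orth}(L)$. (Note that $\mathrm{adj}(T)$ is not a singleton here: $g\mapsto(g,0)$ is another adjoint, for which the conclusion does hold; uniqueness of the adjoint of $T\colon L\to M$ actually requires $L$ definite rather than $M$ definite.) The honest fix is to assume $L$ definite as well: then $h\in L^{0}=\{0\}$ finishes your forward direction in one line, and the adjoint is genuinely unique.
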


\begin{proof}
Assume that $T$ is a lattice homomorphism. Since $T$ and $T^{\ast}$ are
positive (see Proposition \ref{pospos}), so is $T^{\ast}T$. Let $f,g\in L$
with $f\wedge g=0$. From $Tf\wedge Tg=0$ it follows that $\left\langle
Tf,Tg\right\rangle =0$. We get%
\begin{align*}
0  &  \leq\left\langle \left(  T^{\ast}T\right)  f\wedge g,\left(  T^{\ast
}T\right)  f\wedge g\right\rangle \leq\left\langle \left(  T^{\ast}T\right)
f,g\right\rangle \\
&  =\left\langle T^{\ast}Tf,g\right\rangle =\left\langle Tf,Tg\right\rangle
=0.
\end{align*}
Since $M$ is definite, we derive that $\left(  T^{\ast}T\right)  f\wedge g=0$
so $T^{\ast}T\in\mathrm{Orth}\left(  L\right)  $.

Conversely, suppose that $T^{\ast}T\in\mathrm{Orth}\left(  L\right)  $ and
pick $f,g\in L$ with $f\wedge g=0$. Hence, $\left(  T^{\ast}T\right)  f\wedge
g=0$ because $T^{\ast}T$ is a positive orthomorphism. Therefore,%
\[
\left\langle Tf,Tg\right\rangle =\left\langle \left(  T^{\ast}T\right)
f,g\right\rangle =0.
\]
Consequently,%
\[
0\leq\left\langle Tf\wedge Tg,Tf\wedge Tg\right\rangle \leq\left\langle
Tf,Tg\right\rangle =0.
\]
We derive that $Tf\wedge Tg=0$ as $M$ is definite. This yields that $T$ is a
lattice homomorphism and completes the proof.
\end{proof}

A quite curious consequences of Theorem \ref{riesz} is discussed next. Let $T$
be a positive $n\times n$ matrix $(n\in\mathbb{N})$ such that $T^{\ast}T$ is
diagonal. Then each row of $T$ contains at most one nonzero (positive) entry.
Indeed, since $T^{\ast}T$ is diagonal, $T^{\ast}T$ is an orthomorphism on
$\mathbb{R}^{n}$. But then $T$ is a lattice homomorphism (where we use Theorem
\ref{riesz}) and the result follows (see \cite{AA02} or \cite{S74}).

We proceed to a question which arises naturally. Namely, is the eventual
adjoint of a lattice homomorphism on orthosymmetric spaces again a lattice
homomorphism ? The following simple example proves that this is not true in general.

\begin{example}
Let $L=\mathbb{R}^{3}$ be equipped with its structure of definite
orthosymmetric space over $\mathbb{R}$ as explained in \emph{Example
\ref{Euclidean}}. Consider%
\[
T=\left(
\begin{array}
[c]{ccc}%
0 & 0 & 0\\
0 & 1 & 0\\
0 & 1 & 0
\end{array}
\right)  \in\mathcal{L}_{\mathrm{r}}\left(  L\right)
\]
and observe that $T$ is a lattice homomorphism on $L$. However,%
\[
T^{\ast}=\left(
\begin{array}
[c]{ccc}%
0 & 0 & 0\\
0 & 1 & 1\\
0 & 0 & 0
\end{array}
\right)
\]
is not a lattice homomorphism.
\end{example}

The situation improves if $T$ is onto as we prove in the following.

\begin{corollary}
Let $L,M$ be orthosymmetric spaces with $M$ definite and $T\in\mathcal{L}%
_{\mathrm{r}}\left(  T\right)  $ be a surjective lattice homomorphism such
that $T^{\ast}\in\mathcal{L}_{\mathrm{r}}\left(  M,L\right)  $ exists. Then
$T^{\ast}$ is again a lattice homomorphism.
\end{corollary}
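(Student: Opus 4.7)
The plan is to combine Theorem \ref{riesz} with the standard fact that a positive operator between vector lattices is a lattice homomorphism if and only if it sends disjoint pairs to disjoint pairs. Since Proposition \ref{pospos} already gives that $T^{\ast}$ is positive, what remains is to prove: whenever $g_{1},g_{2}\in M^{+}$ satisfy $g_{1}\wedge g_{2}=0$, one has $T^{\ast}g_{1}\wedge T^{\ast}g_{2}=0$ in $L$. (The preservation of absolute values then follows by applying this to $g^{+},g^{-}$, which are disjoint.)

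The first step is a lifting argument. Using surjectivity of $T$ I would pick preimages of $g_{1},g_{2}$ and, since $T$ is a lattice homomorphism, replace them by their absolute values to produce $\tilde{f}_{i}\in L^{+}$ with $T\tilde{f}_{i}=g_{i}$. The meet $\tilde{f}_{1}\wedge\tilde{f}_{2}$ need not vanish, but
\[
T(\tilde{f}_{1}\wedge\tilde{f}_{2})=T\tilde{f}_{1}\wedge T\tilde{f}_{2}=g_{1}\wedge g_{2}=0,
\]
so $\tilde{f}_{1}\wedge\tilde{f}_{2}\in\ker T$. Setting $f_{i}=\tilde{f}_{i}-\tilde{f}_{1}\wedge\tilde{f}_{2}\in L^{+}$ then gives the desired disjoint lift: $Tf_{i}=g_{i}$ and $f_{1}\wedge f_{2}=0$.

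The second step feeds this into Theorem \ref{riesz}. Since $T$ is a (positive) lattice homomorphism, $S:=T^{\ast}T$ is a positive element of $\mathrm{Orth}(L)$. Positive orthomorphisms preserve disjointness (apply the defining implication $f\wedge g=0\Rightarrow f\wedge Sg=0$ twice, once on each side, to get $Sf\wedge Sg=0$), so $f_{1}\wedge f_{2}=0$ yields $Sf_{1}\wedge Sf_{2}=0$, i.e., $T^{\ast}g_{1}\wedge T^{\ast}g_{2}=0$, which completes the proof. I expect the lifting step to be the main obstacle: bare surjectivity cannot force disjoint preimages, and it is crucially the lattice-homomorphism hypothesis on $T$ that places $\tilde{f}_{1}\wedge\tilde{f}_{2}$ into $\ker T$ so that it can be subtracted off cleanly.
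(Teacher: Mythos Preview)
Your proof is correct and rests on the same two ingredients as the paper's: surjectivity of $T$ (to express elements of $M$ in the form $Tf$) and Theorem \ref{riesz} (to know that $T^{\ast}T\in\mathrm{Orth}(L)$, hence---being positive---a lattice homomorphism). The paper, however, bypasses the disjoint-lifting construction entirely: for an arbitrary $g\in M$ it picks any $f\in L$ with $Tf=g$ and computes in one line
\[
T^{\ast}|g|=T^{\ast}|Tf|=T^{\ast}T|f|=|T^{\ast}Tf|=|T^{\ast}g|,
\]
using $|Tf|=T|f|$ (since $T$ is a lattice homomorphism) and $T^{\ast}T|f|=|T^{\ast}Tf|$ (since the positive orthomorphism $T^{\ast}T$ preserves absolute values). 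Your detour through the disjointness criterion is equivalent but longer; in particular, the subtraction trick $f_{i}=\tilde f_{i}-\tilde f_{1}\wedge\tilde f_{2}$ is clever but unnecessary here, since for a single $g=Tf$ one already has the disjoint lift $g^{\pm}=T(f^{\pm})$ with $f^{+}\wedge f^{-}=0$.
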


\begin{proof}
Let $g\in M$ and $f\in L$ such that $g=Tf$. By Theorem \ref{riesz}, the
operator $T^{\ast}T$ is an orthomorphism. Thus,%
\[
T^{\ast}\left\vert g\right\vert =T^{\ast}\left\vert Tf\right\vert =T^{\ast
}T\left\vert f\right\vert =\left\vert T^{\ast}Tf\right\vert =\left\vert
T^{\ast}g\right\vert .
\]
This means that $T^{\ast}$ is a lattice homomorphism, as desired.
\end{proof}

The following shows that only normal lattice homomorphisms have adjoints.
First, recall that a lattice homomorphism $T\in\mathcal{L}_{\mathrm{r}}\left(
L,M\right)  $ is said to be \textsl{normal} if its kernel $\ker\left(
T\right)  $ is a band of $L$.

\begin{corollary}
Let $L,M$ be orthosymmetric space with $M$ definite and $T\in\mathcal{L}%
_{\mathrm{r}}\left(  L,M\right)  $ be a lattice homomorphism such that
$T^{\ast}\in\mathcal{L}_{\mathrm{r}}\left(  M,L\right)  $ exists. Then $T$ is normal.
\end{corollary}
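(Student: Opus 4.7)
My plan is the following. Since $T$ is a lattice homomorphism, $\ker T$ is automatically an ideal of $L$, so the whole task reduces to upgrading ``ideal'' to ``band.'' The bridge to get there is Theorem~\ref{riesz}: combined with Proposition~\ref{pospos}, it tells us that $T^{\ast}T$ is a positive orthomorphism on $L$. If I can show $\ker T=\ker(T^{\ast}T)$, then normality follows because the kernel of a positive orthomorphism is known to be a band.

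The crux is therefore the identification $\ker T=\ker(T^{\ast}T)$. The inclusion $\ker T\subseteq\ker(T^{\ast}T)$ is free from the definition of the composite. For the reverse inclusion, I would take $f\in L$ with $T^{\ast}Tf=0$ and pair against $f$: using the defining property of the adjoint together with the symmetry of the $\mathbb{V}$-valued orthosymmetric product (Lemma~\ref{Steinberg}),
\[
0=\langle T^{\ast}Tf,f\rangle =\langle Tf,Tf\rangle ,
\]
so $Tf\in M^{0}$. Since $M$ is assumed definite, $M^{0}=\{0\}$, forcing $Tf=0$ and hence $f\in\ker T$.

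With the identification in hand, the conclusion is a standard invocation: a positive orthomorphism is band preserving and its kernel is the disjoint complement of its carrier, and is therefore a band. Combining with $\ker T=\ker(T^{\ast}T)$, we get that $\ker T$ is a band, i.e., $T$ is normal.

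The only nontrivial step is the identification $\ker T=\ker(T^{\ast}T)$; the rest is direct bookkeeping from Theorem~\ref{riesz} and the definiteness of $M$, which is precisely what makes the adjoint formalism of Section~3 behave reasonably. I do not expect any genuine obstacle here, since the short calculation $\langle T^{\ast}Tf,f\rangle=\langle Tf,Tf\rangle$ does all the work.
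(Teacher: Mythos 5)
Your argument is correct and coincides with the paper's own proof: both establish $\ker T=\ker(T^{\ast}T)$ via the computation $\langle T^{\ast}Tf,f\rangle=\langle Tf,Tf\rangle$ and the definiteness of $M$, then invoke Theorem~\ref{riesz} to see that $T^{\ast}T$ is an orthomorphism, whose kernel is a band. No discrepancies to report.
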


\begin{proof}
We prove that $\ker\left(  T\right)  $ is band. We claim that $\ker\left(
T^{\ast}T\right)  =\ker\left(  T\right)  $. Obviously, $\ker\left(  T^{\ast
}T\right)  $ contains $\ker\left(  T\right)  $. Conversely, if $f\in
\ker\left(  T^{\ast}T\right)  $ then%
\[
0\leq\left\langle Tf,Tf\right\rangle =\left\langle T^{\ast}Tf,f\right\rangle
=0.
\]
It follows that $Tf=0$ because $M$ is definite. We derive that $\ker\left(
T^{\ast}T\right)  =\ker\left(  T\right)  $, as required. On the other hand,
Theorem \ref{riesz} guaranties that $T^{\ast}T$ is an orthomorphism on $L$.
Accordingly, $\ker\left(  T^{\ast}T\right)  $ is a band of $L$ and so is
$\ker\left(  T\right)  $. This yields that $T$ is normal and completes the proof.
\end{proof}

The last part of the paper deals with interval preserving operators. For any
positive element $f$ in a vector lattice $L$, we set%
\[
\left[  0,f\right]  =\left\{  g\in M:0\leq g\leq f\right\}  .
\]
A positive operator $T:M\rightarrow L$ is said to be \textsl{interval
preserving} if%
\[
T\left[  0,f\right]  =\left[  0,Tf\right]  \text{ for all }f\in M^{+}\text{,}%
\]
A sufficient condition for a positive operator acting on orthosymmetric spaces
to be a lattice homomorphism is to have an interval preserving adjoint.

\begin{theorem}
Let $L,M$ be orthosymmetric space with $M$ definite and $T\in\mathcal{L}%
_{\mathrm{r}}\left(  L,M\right)  $ be positive such that $T^{\ast}$ exists. If
$T^{\ast}$ is interval preserving then $T$ is a lattice homomorphism.
\end{theorem}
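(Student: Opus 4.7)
The plan is to invoke Theorem \ref{riesz} and reduce the task to showing that $T^{\ast}T\in\mathrm{Orth}(L)$. Since $T$ is positive by hypothesis and $T^{\ast}$ is positive by Proposition \ref{pospos}, the composition $T^{\ast}T$ is already a positive operator, so the only thing to check is the orthomorphism property: whenever $f,g\in L^{+}$ satisfy $f\wedge g=0$, one should have $T^{\ast}Tf\wedge g=0$.

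The first key step is to exploit interval preservation of $T^{\ast}$. From $0\leq T^{\ast}Tf\wedge g\leq T^{\ast}(Tf)$ with $Tf\in M^{+}$, the equality $T^{\ast}[0,Tf]=[0,T^{\ast}Tf]$ produces an element $h\in M$ with $0\leq h\leq Tf$ and $T^{\ast}Tf\wedge g=T^{\ast}h$. This reformulation is the heart of the argument: it transports the meet from $L$ to an auxiliary element $h$ lying in the positive cone of $M$, which is exactly where the orthosymmetric product of $M$ becomes available.

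Next, I would combine $T^{\ast}h\leq g$ with $f\wedge g=0$ to conclude $f\wedge T^{\ast}h=0$ by monotonicity of the meet, and then orthosymmetry yields $\langle f,T^{\ast}h\rangle=0$. The adjoint identity rewrites this as $\langle Tf,h\rangle=0$ in $\mathbb{V}$. Because $0\leq h\leq Tf$, the positivity of the $\mathbb{V}$-valued product on $M^{+}$ applied to $Tf-h\geq 0$ gives $\langle h,Tf-h\rangle\geq 0$, that is, $0\leq\langle h,h\rangle\leq\langle h,Tf\rangle=0$. Hence $\langle h,h\rangle=0$, so $h\in M^{0}$, and definiteness of $M$ forces $h=0$. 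Therefore $T^{\ast}Tf\wedge g=T^{\ast}h=0$, which completes the verification.

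The main obstacle is recognizing how interval preservation should be deployed: the insight is that one must first write the mixed meet $T^{\ast}Tf\wedge g$ as an image $T^{\ast}h$, because only then can the adjoint identity bridge the orthogonality in $L$ to a positivity statement about $\langle h,h\rangle$ in $\mathbb{V}$. Once that substitution is in place, the chain $f\wedge T^{\ast}h=0\Rightarrow\langle Tf,h\rangle=0\Rightarrow\langle h,h\rangle=0\Rightarrow h=0$ runs essentially automatically, with positivity of the product, orthosymmetry, and definiteness of $M$ each contributing one link.
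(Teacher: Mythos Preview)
Your argument is correct, and it is genuinely different from the paper's. The paper proves $T(f^{+})=(Tf)^{+}$ directly by working with the multiplication operators $\Phi$: it invokes Theorem~\ref{main} to identify $\Phi_{(Tf)^{+}}$ with the positive part $\Phi_{Tf}^{+}$ in $\mathcal{L}_{r}(M,\mathbb{V})$, computes this positive part via a Riesz--Kantorovich supremum, and then uses interval preservation of $T^{\ast}$ to rewrite that supremum as $\Phi_{f}^{+}(T^{\ast}h)=\Phi_{T(f^{+})}h$. Your route instead reduces to Theorem~\ref{riesz} and verifies $T^{\ast}T\in\mathrm{Orth}(L)$ by a short element-level computation: interval preservation lets you write $T^{\ast}Tf\wedge g=T^{\ast}h$ for some $0\le h\le Tf$, and then orthosymmetry, the adjoint identity, and definiteness of $M$ force $h=0$. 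The advantage of your approach is that it is more elementary---it bypasses the machinery of Section~2 (Theorem~\ref{main} and the Riesz--Kantorovich analysis) entirely and uses only the easy direction of Theorem~\ref{riesz}; the paper's approach, on the other hand, yields the identity $(Tf)^{+}=T(f^{+})$ without detouring through $T^{\ast}T$, and illustrates how the lattice structure of $\mathcal{M}(M,\mathbb{V})$ can be used in practice.
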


\begin{proof}
We choose $f\in L$ and we claim that $T\left(  f^{+}\right)  =\left(
Tf\right)  ^{+}$. To this end, observe that if $0\leq h\in M^{+}$ then%
\begin{align*}
\Phi_{\left(  Tf\right)  ^{+}}h  &  =\Phi_{Tf}^{+}h=\sup\left\{  \left\langle
Tf,g\right\rangle :0\leq g\leq h\text{ in }M\right\} \\
&  =\sup\left\{  \left\langle f,T^{\ast}g\right\rangle :0\leq g\leq h\text{ in
}M\right\} \\
&  \leq\sup\left\{  \left\langle f,u\right\rangle :0\leq u\leq T^{\ast}h\text{
in }L\right\}  .
\end{align*}
On the other hand, if $0\leq u\leq T^{\ast}h$ in $L$, then there exists
$g\in\left[  0,h\right]  $ such that $u=T^{\ast}g$. Accordingly,%
\begin{align*}
\Phi_{\left(  Tf\right)  ^{+}}h  &  =\sup\left\{  \left\langle
f,u\right\rangle :0\leq u\leq T^{\ast}h\text{ in }L\right\} \\
&  =\Phi_{f}^{+}T^{\ast}h=\Phi_{f^{+}}T^{\ast}h=\Phi_{T\left(  f^{+}\right)
}h.
\end{align*}
It follows that $\Phi_{\left(  Tf\right)  ^{+}}=\Phi_{T\left(  f^{+}\right)
}$ and so $\left(  Tf\right)  ^{+}=T\left(  f^{+}\right)  $ because $M$ is
definite. This ends the proof of the theorem.
\end{proof}

The converse of the previous theorem fails as it can be seen in the following
example, which is the last item of this paper.

\begin{example}
Let $L=M=C\left[  0,1\right]  $ be equipped with any structure of definite
symmetric space and define $T\in\mathrm{Orth}\left(  L\right)  $ by%
\[
\left(  Tf\right)  \left(  x\right)  =xf\left(  x\right)  \text{ for all }f\in
L\text{ and }x\in\left[  0,1\right]  .
\]
Since $T\in\mathrm{Orth}\left(  L\right)  $, we derive that $T^{\ast}$ exists
and $T=T^{\ast}$ \emph{(}see \emph{Theorem \ref{orth}).} Of course, $T$ is a
lattice homomorphism. However, $T$ is not interval preserving. Indeed, if
$g\in L$ is defined by%
\[
g\left(  x\right)  =x\left\vert \sin\frac{1}{x}\right\vert \text{ if }%
x\in\left]  0,1\right]  \text{ and }g\left(  0\right)  =0,
\]
then $0\leq g\leq T\mathbf{1}$ but there is not $f\in L$ such that $g=Tf$.
\end{example}

\end{document}